\documentclass[a4paper]{amsart}
\usepackage{graphicx}
\usepackage{amssymb}
\usepackage{amsmath}
\usepackage{amsthm}
\usepackage{amscd}
\usepackage{color}
\usepackage{colordvi}
\usepackage[all,2cell]{xy}
%\addtolength{\textwidth}{1.5 cm} \oddsidemargin 0.8cm
%\evensidemargin 0.8cm

\UseAllTwocells \SilentMatrices
\newtheorem{thm}{Theorem}[section]

\newtheorem{cor}[thm]{Corollary}
\newtheorem{lem}[thm]{Lemma}
\newtheorem{exm}[thm]{Example}

\newtheorem{prop}[thm]{Proposition}
\theoremstyle{definition}
\newtheorem{defn}[thm]{Definition}
\theoremstyle{remark}
\newtheorem{rem}[thm]{\bf Remark}
\numberwithin{equation}{section}

\begin{document}
\title[Gorenstein homological properties of tensor rings]{Gorenstein homological properties of tensor rings}
\author[Xiao-Wu Chen, Ming Lu] {Xiao-Wu Chen, Ming Lu$^*$}

\thanks{$^*$ The corresponding author}
\thanks{}
\subjclass[2010]{16E10, 16E65, 18G25}
\date{\today}

\thanks{E-mail: xwchen$\symbol{64}$mail.ustc.edu.cn; luming$\symbol{64}$scu.edu.cn}
%\thanks{this is NOT for distribution}
\keywords{Gorenstein projective module, Frobenius category, tensor ring}%

\maketitle

\dedicatory{}%
\commby{}%
%\begin{center}
%\end{center}

\begin{abstract}
Let $R$ be a two-sided noetherian ring and $M$ be a nilpotent $R$-bimodule, which is finitely generated on both sides.  We study  Gorenstein homological properties of the tensor ring $T_R(M)$. Under certain conditions, the ring $R$ is Gorenstein if and only if so is $T_R(M)$. We characterize Gorenstein projective $T_R(M)$-modules in terms of $R$-modules.
\end{abstract}

\section{Introduction}

Let $R$ be a two-sided noetherian ring. In Gorenstein homological algebra, the following Gorenstein homological properties of $R$ are the main concerns: the Gorensteinness of the ring $R$, the (stable) category of Gorenstein projective $R$-modules, and the Gorenstein projective dimensions and resolutions of $R$-modules.

Let $M$ be an $R$-bimodule, which is finitely generated on both sides. The  classical homological properties of the tensor ring $T_R(M)$ are studied  in \cite{Ro, Cohn}. In general, the tensor ring $T_R(M)$ is not noetherian. Hence, we require that $M$ is \emph{nilpotent}, that is, its $n$-th tensor power vanishes for $n$ large enough, in which case $T_R(M)$ is two-sided noetherian.

We are concerned with the Gorenstein homological properties of the tensor ring $T_R(M)$ for a nilpotent $R$-bimodule $M$. The motivation is the example in \cite{GLS}, which is a tensor ring and  is $1$-Gorenstein, that is, the regular module has self-injective dimension at most one on each side. More precisely, the ring $R$ considered in \cite{GLS}  is a certain self-injective algebra and the nilpotent $R$-bimodule $M$ is projective on each side.  Then the tensor ring $T_R(M)$ is $1$-Gorenstein, whose modules yield a characteristic-free categorification of the root system. We mention other related examples in \cite{XZ, Wang, LY}.

The main result of this paper is a vast generalization of the mentioned examples. An $R$-bimodule $M$ is called \emph{perfect} provided that it has finite projective dimension on each side and satisfies the following Tor-vanishing conditions: ${\rm Tor}_j^{R}(M, M^{\otimes_R i})=0$ for each $i, j\geq 1$, where $M^{\otimes_R i}$ is the $i$-th tensor power of $M$.

\vskip 5pt

\noindent {\bf Theorem.}\quad \emph{Let $R$ be a two-sided noetherian ring, and $M$ be a nilpotent perfect $R$-bimodule.  Then the ring $R$ is Gorenstein if and only if so is the tensor ring $T_R(M)$.}

\vskip 5pt

We give an inequality between the Gorenstein dimensions of $R$ and $T_R(M)$. For details, we refer to Theorem \ref{thm:3}.

We characterize Gorenstein projective $T_R(M)$-modules  in terms of $R$-modules; see Theorem \ref{thm:2}. This extends the corresponding description in \cite[Theorem 10.9]{GLS}.  It is well known that the study of Gorenstein projective modules are intimately related to that of Frobenius categories; compare \cite{Bel3, Chen}. The characterization of Gorenstein projective $T_R(M)$-modules relies on an explicit construction of a new Frobenius category; see Theorem \ref{thm:1}. It seems that this construction might be of independent interest.

The paper is structured as follows. In Section 2, we construct a new Frobenius category,  using the category of representations of a nilpotent endofunctor on an abelian category. In Section 3, we characterize Gorenstein projective $T_R(M)$-modules for a certain nilpotent $R$-bimodule $M$. We also study the Gorenstein projective dimensions of $T_R(M)$-modules. In Section 4, we introduce the notion of a perfect bimodule and prove Theorem \ref{thm:3}. Some (non-)examples  are studied in the end.

\section{The construction of a new Frobenius category}

 In this section, we construct a new Frobenius category,  which is an exact subcategory in the category of representations of a certain nilpotent endofunctor on an abelian category. For exact categories, we refer to \cite[Appendix A]{Ke3}.

\subsection{The category of representations}

Let $\mathcal{A}$ be an additive category with an additive endofunctor $F\colon \mathcal{A}\rightarrow \mathcal{A}$. By a \emph{representation} of $F$, we mean a pair $(X, u)$ with $X$ an object and $u\colon F(X)\rightarrow X$ a morphism in $\mathcal{A}$. A morphism $f\colon (X, u)\rightarrow (Y, v)$ between two representations is a morphism $f\colon X\rightarrow Y$  in $\mathcal{A}$ satisfying $f\circ u=v\circ F(f)$. This defines the category ${\rm rep}(F)$ of representations of $F$. We have a forgetful functor
$$U\colon {\rm rep}(F)\longrightarrow \mathcal{A}$$
sending $(X, u)$ to the underlying object $X$.

We assume that $F$ is \emph{nilpotent}, that is $F^{N+1}=0$ for some $N\geq 0$. For each object $A$, we define ${\rm Ind}(A)=\bigoplus_{i=0}^N F^i(A)$ with $F^0={\rm Id}_\mathcal{A}$, and a morphism $c_A\colon F{\rm Ind}(A)\rightarrow {\rm Ind}(A)$ such that its restriction to $F(F^i(A))=F^{i+1}(A)$ is the inclusion into ${\rm Ind}(A)$. This defines a representation $({\rm Ind}(A), c_A)$ of $F$. Moreover, we have the \emph{induction} functor $${\rm Ind}\colon \mathcal{A}\longrightarrow {\rm rep}(F)$$
 sending $A$ to $({\rm Ind}(A), c_A)$, and a morphism $f$ to $\bigoplus_{i=0}^N F^i(f)$.

 \begin{lem}\label{lem:adj}
 Keep the notation as above. Then the pair $({\rm Ind}, U)$ is adjoint.
 \end{lem}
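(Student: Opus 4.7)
The plan is to exhibit a natural bijection
\[
\Phi_{A,(Y,v)}\colon \mathrm{Hom}_{\mathrm{rep}(F)}(\mathrm{Ind}(A),(Y,v))\longrightarrow \mathrm{Hom}_{\mathcal{A}}(A,Y)
\]
and verify naturality in both arguments. The idea is to decompose a morphism $f\colon \mathrm{Ind}(A)\to Y$ into its components along the direct sum $\mathrm{Ind}(A)=\bigoplus_{i=0}^{N}F^{i}(A)$, write $f=(f_{0},f_{1},\dots,f_{N})$ with $f_{i}\colon F^{i}(A)\to Y$, and define $\Phi(f)=f_{0}$.

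First I would translate the compatibility condition $f\circ c_{A}=v\circ F(f)$ into a recursion on the components. Since $F^{N+1}=0$, we have $F(\mathrm{Ind}(A))=\bigoplus_{i=0}^{N}F^{i+1}(A)=\bigoplus_{i=1}^{N}F^{i}(A)$, and by the very definition of $c_{A}$ its restriction to the summand $F^{i+1}(A)$ is the inclusion into $\mathrm{Ind}(A)$. Hence $(f\circ c_{A})|_{F^{i+1}(A)}=f_{i+1}$, while $(v\circ F(f))|_{F^{i+1}(A)}=v\circ F(f_{i})$. The morphism condition in $\mathrm{rep}(F)$ therefore amounts to
\[
f_{i+1}=v\circ F(f_{i})\qquad (0\le i\le N-1).
\]
In particular the entire tuple $(f_{0},\dots,f_{N})$ is determined by $f_{0}$. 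Conversely, given any $g\colon A\to Y$, the tuple defined by $g_{0}=g$ and $g_{i+1}=v\circ F(g_{i})$ yields a well-defined morphism $\mathrm{Ind}(A)\to (Y,v)$ in $\mathrm{rep}(F)$; well-definedness requires no check at the top since $F^{N+1}(A)=0$. This shows $\Phi_{A,(Y,v)}$ is a bijection with inverse $g\mapsto(g,v\circ F(g),v\circ F(v\circ F(g)),\dots)$.

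The last step is naturality. For a morphism $h\colon A'\to A$ in $\mathcal{A}$, the induced morphism $\mathrm{Ind}(h)=\bigoplus_{i=0}^{N}F^{i}(h)$ satisfies $(f\circ\mathrm{Ind}(h))_{0}=f_{0}\circ h$, giving naturality in the first variable. For a morphism $k\colon (Y,v)\to(Y',v')$ in $\mathrm{rep}(F)$, we have $(k\circ f)_{0}=k\circ f_{0}=U(k)\circ f_{0}$, giving naturality in the second. The unit of the adjunction is the inclusion $\eta_{A}\colon A\hookrightarrow \mathrm{Ind}(A)$ onto the $i=0$ summand, and the counit is the morphism $\varepsilon_{(Y,v)}\colon \mathrm{Ind}(Y)\to (Y,v)$ whose components are $v^{i}\circ F^{i-1}(v)\circ\cdots\circ v$; these can be written down explicitly and used as an alternative way to package the adjunction.

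The main thing to be careful about is the indexing, in particular that $F^{N+1}=0$ makes both $c_{A}$ and the recursion terminate consistently so that no extraneous compatibility is imposed at the top of the filtration; no deeper obstacle is expected.
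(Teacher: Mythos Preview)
Your argument is correct and follows exactly the paper's approach: both define the bijection by restriction to the $F^{0}(A)=A$ summand and construct the inverse by the recursion $f_{i+1}=v\circ F(f_i)$, yielding the same explicit formula $v\circ F(v)\circ\cdots\circ F^{i-1}(v)\circ F^{i}(g)$ on $F^{i}(A)$. Your write-up is in fact more complete, since you spell out naturality; the only blemish is the garbled expression ``$v^{i}\circ F^{i-1}(v)\circ\cdots\circ v$'' for the counit components, which should read $v\circ F(v)\circ\cdots\circ F^{i-1}(v)$.
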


 \begin{proof}
 The natural isomorphism
 $${\rm Hom}_{{\rm rep}(F)}(({\rm Ind}(A), c_A), (X, u))\stackrel{\sim}\longrightarrow {\rm Hom}_\mathcal{A}(A, X)$$
 sends $f$ to the restriction $f|_{A}$. The inverse sends $g\colon A\rightarrow X$ to $g'\colon ({\rm Ind}(A), c_A)\rightarrow (X, u)$, such that the restriction of $g'$ to $F^0(A)=A$ is $g$, and to $F^i(A)$ is given by $u\circ F(u)\circ \cdots \circ F^{i-1}(u)\circ F^i(g)$ for $i\geq 1$.
 \end{proof}

We refer to \cite[Chapter VI]{McL} for monads and monadic adjoint pairs.

\begin{rem}
The nilpotent endofunctor $F$ defines a monad $M$ on $\mathcal{A}$, which, as a functor, equals $\bigoplus_{i=0}^n F^i$ and whose multiplication is induced by the composition of functors. There is an isomorphism of categories between ${\rm rep}(F)$ and $M\mbox{-Mod}_\mathcal{A}$, the category of $M$-modules in $\mathcal{A}$. In other words, the adjoint pair $({\rm Ind}, U)$ is strictly monadic.
\end{rem}

We characterize the essential image of the induction functor.

\begin{lem}\label{lem:ind}
A representation $(X, u)$ is isomorphic to $({\rm Ind}(A), c_A)$ for some object $A$ if and only if $u$ is a split monomorphism which has a cokernel in $\mathcal{A}$.
\end{lem}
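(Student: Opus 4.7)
The plan is to verify each direction separately; the first is essentially a matter of unpacking definitions, while the second produces an explicit isomorphism from splitting data.

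For the ``only if'' direction, I would compute directly with the representation $({\rm Ind}(A), c_A)$. Since $F$ is additive and $F^{N+1}=0$, applying $F$ to ${\rm Ind}(A)=\bigoplus_{i=0}^N F^i(A)$ yields $\bigoplus_{i=0}^N F^{i+1}(A)=\bigoplus_{i=1}^N F^i(A)$, and by construction $c_A$ is the canonical inclusion of this subobject into $\bigoplus_{i=0}^N F^i(A)$. Hence $c_A$ is a split monomorphism whose cokernel is $F^0(A)=A$.

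For the ``if'' direction, suppose $u\colon F(X)\to X$ is a split monomorphism with retraction $r\colon X\to F(X)$ and cokernel $\pi\colon X\to A$. First I would construct a section $\iota\colon A\to X$ of $\pi$ satisfying $r\circ\iota=0$. Since $({\rm id}_X-u\circ r)\circ u=0$, the universal property of the cokernel gives a unique $\iota$ with $\iota\circ\pi={\rm id}_X-u\circ r$; composing with $\pi$ and with $r$ on the left, and using that $\pi$ is epic, then yields $\pi\circ\iota={\rm id}_A$ and $r\circ\iota=0$. Consequently, $[u,\iota]\colon F(X)\oplus A\to X$ is an isomorphism in $\mathcal{A}$ under which $u$ becomes the canonical inclusion of the first summand. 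Applying $F$ (which, being additive, preserves direct sum decompositions) gives $F(X)\cong F^2(X)\oplus F(A)$, so $X\cong F^2(X)\oplus F(A)\oplus A$; iterating $N$ times and invoking $F^{N+1}=0$ produces an isomorphism
\[
\phi\colon {\rm Ind}(A)=\bigoplus_{i=0}^N F^i(A)\xrightarrow{\;\sim\;} X
\]
whose restriction to $F^i(A)$ is $u\circ F(u)\circ\cdots\circ F^{i-1}(u)\circ F^i(\iota)$ (and just $\iota$ on $F^0(A)=A$).

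It remains to verify that $\phi$ is a morphism of representations, that is $u\circ F(\phi)=\phi\circ c_A$. This reduces to a one-line comparison on each summand $F^{i+1}(A)$ of $F({\rm Ind}(A))=\bigoplus_{i=1}^N F^i(A)$: both sides equal $u\circ F(u)\circ\cdots\circ F^i(u)\circ F^{i+1}(\iota)$. The main technical point is the construction of the section $\iota$ compatible with the retraction $r$; once that is in place, the rest is a routine iterative application of the splitting combined with the nilpotence of $F$, and I anticipate no serious obstacle.
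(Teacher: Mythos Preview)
Your proof is correct and follows essentially the same route as the paper: split off a section $\iota$ of the cokernel, iterate the decomposition $X\cong A\oplus F(X)$ using nilpotency, and identify the resulting map as the morphism of representations whose $F^i(A)$-component is $u\circ F(u)\circ\cdots\circ F^{i-1}(u)\circ F^i(\iota)$. The only cosmetic differences are that the paper takes an arbitrary section (your extra condition $r\circ\iota=0$ is harmless but unnecessary) and, rather than checking $u\circ F(\phi)=\phi\circ c_A$ by hand, simply observes that $\phi$ is the adjoint of $\iota$ under the $({\rm Ind},U)$ adjunction of Lemma~\ref{lem:adj}.
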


\begin{proof}
For the ``only if" part, we just observe that the cokernel of $c_A$ is the natural projection on $A$.

 For the ``if" part, we assume that the cokernel of $u$ is $\pi\colon X\rightarrow A$. Take a section $\iota \colon A\rightarrow X$ of $\pi$. Then $X$ is isomorphic to $A\oplus F(X)$, which is further isomorphic to $A \oplus F(A\oplus F(X))$. Using induction and the nilpotency of $F$, we infer that $X$ is isomorphic to ${\rm Ind}(A)=A\oplus F(A)\oplus \cdots \oplus F^N(A)$; moreover, the corresponding inclusion map $F^i(A)\rightarrow X$ is given by $u \circ F(u)\circ \cdots \circ F^{i-1}(u) \circ F^i(\iota)$. In other words, this gives rise to an isomorphism $({\rm Ind}(A), c_A)\rightarrow (X, u)$, which corresponds to $\iota\colon A\rightarrow X$ in the adjoint pair $({\rm Ind}, U)$; compare the proof of Lemma \ref{lem:adj}.
\end{proof}

We assume now that $\mathcal{A}$ is abelian and that the endofunctor $F$ is right exact. Then ${\rm rep}(F)$ is an abelian category. Moreover, a sequence $(X, u)\stackrel{f}\rightarrow (Y, v)\stackrel{g}\rightarrow (Z, w)$ is exact in ${\rm rep}(F)$ if and only if the underlying sequence $X\stackrel{f}\rightarrow Y\stackrel{g}\rightarrow Z$ is exact in $\mathcal{A}$. In particular, the forgetful functor $U\colon {\rm rep}(F)\rightarrow \mathcal{A}$ is exact.

For each $(X, u)\in {\rm rep}(F)$, there is an exact sequence in ${\rm rep}(F)$
\begin{align}\label{equ:exact}
0\longrightarrow ({\rm Ind}(FX), c_{FX}) \xrightarrow{\phi_{(X, u)}} ({\rm Ind}(X), c_X) \xrightarrow{\eta_{(X, u)}}  (X, u)\longrightarrow 0.
\end{align}
Here, we view $\phi_{(X, u)}$ as a formal matrix; the only possibly nonzero entries are ${\rm Id}_{F^i(X)}\colon F^i(X)\rightarrow F^i(X)$ and $-F^{i-1}(u)\colon F^i(X)\rightarrow F^{i-1}(X)$ for $i\geq 1$. The morphism $\eta_{(X, u)}$ is given by the counit of the adjoint pair $({\rm Ind}, U)$. More precisely, the restriction of $\eta_{(X, u)}$  on $X$ is ${\rm Id}_X$, and on $F^i(X)$ is $u\circ F(u)\circ \cdots \circ F^{i-1}(u)$ for $i\geq 1$. The above  exact sequence is a categorical version of the one in \cite[Lemma]{Ro}.

\subsection{A new Frobenius category}

Let $\mathcal{A}$ be an abelian category. A full additive subcategory $\mathcal{E}$ is \emph{exact} provided that it is closed under extensions. In this case, $\mathcal{E}$ becomes naturally an exact category in the sense of Quillen, whose conflations are given by short exact sequences with terms in $\mathcal{E}$.

Recall that an exact category $\mathcal{E}$ is \emph{Frobenius} provided that it has enough projective objects and enough injective objects, and the class of projective objects coincides with the class of injective objects. Frobenius categories are important, since their stable categories modulo projective objects have  natural triangulated structures; see \cite[Section I.2]{Hap}.

We will consider the following conditions for the triple $(\mathcal{A}, \mathcal{A}', F)$.
\begin{enumerate}
\item[(F1)] The category $\mathcal{A}$ is abelian, and $\mathcal{A}'\subseteq \mathcal{A}$ is an exact subcategory, which is Frobenius as an exact category. Denote by $\mathcal{P}\subseteq \mathcal{A}'$ the subcategory of projective objects.
\item[(F2)] For every epimorphism $f\colon A\rightarrow X$ with $X\in \mathcal{A}'$, there is an epimorphism $g\colon Y\rightarrow A$ with $Y\in \mathcal{A}'$ and ${\rm Ker}(f\circ g)\in \mathcal{A}'$.
\item[(F3)] The endofunctor $F\colon \mathcal{A}\rightarrow \mathcal{A}$ is right exact and nilpotent, satisfying that ${\rm Ext}^1_\mathcal{A}(X, F^i(P))=0={\rm Ext}^1_\mathcal{A}(P, F^i(X))$ for any $X\in \mathcal{A}'$, $P\in \mathcal{P}$ and $i\geq 1$.
    \item[(F4)] For any exact sequence $\eta\colon 0\rightarrow A \rightarrow B\rightarrow X\rightarrow 0$ in $\mathcal{A}$, we have that $F(\eta)$ is exact,  provided that $X$ admits a monomorphism $u\colon F(X)\rightarrow X$ with ${\rm Cok}u\in \mathcal{A}'$.
\end{enumerate}

\begin{lem}\label{lem:defn}
Assume that the triple $(\mathcal{A}, \mathcal{A}', F)$ satisfies {\rm (F1)-(F4)}. Then the following statements hold.
\begin{enumerate}
\item For every epimorphism $f\colon A\rightarrow X$ with $X\in \mathcal{A}'$, there is an epimorphism $g'\colon P\rightarrow A$ with $P\in \mathcal{P}$ and ${\rm Ker}(f\circ g')\in \mathcal{A}'$.
\item For an exact sequence $0\rightarrow A\stackrel{f}\rightarrow B\rightarrow X\rightarrow 0$ with $X\in \mathcal{A}'$ and a morphism $a\colon A\rightarrow F^i(P)$ for some $P\in \mathcal{P}$ and $i\geq 0$, there is a morphism $b\colon B\rightarrow F^i(P)$ with $a=b\circ f$.
\item For an exact sequence $\eta\colon 0\rightarrow A\rightarrow B\rightarrow F^i(X)\rightarrow 0$ with $X\in \mathcal{A}'$ and $i\geq 0$, we have that $F(\eta)$ is exact.
    \item For any monomorphism $f\colon A\rightarrow B$ with cokernel in $\mathcal{A}'$, we have that $F^i(f)$ is mono for any $i\geq 1$.
        \item Assume that $u\colon F(X)\rightarrow X$ is a monomorphism with ${\rm Cok}u\in \mathcal{A}'$. Then ${\rm Ext}_\mathcal{A}^1(P, X)=0$ for each $P\in \mathcal{P}'$.
\end{enumerate}
\end{lem}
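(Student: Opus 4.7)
The plan is to prove the five parts in the stated order, since each later claim leans on its predecessors. For (1), I would combine (F2) with the fact that the Frobenius category $\mathcal{A}'$ has enough projective objects. Namely, (F2) supplies an epi $g\colon Y\to A$ with $Y\in\mathcal{A}'$ and $\ker(fg)\in\mathcal{A}'$; a projective deflation $h\colon P\to Y$ in $\mathcal{A}'$ has $\ker h\in\mathcal{A}'$, and extension-closure of $\mathcal{A}'$ applied to the natural short exact sequence $0\to\ker h\to\ker(fgh)\to\ker(fg)\to 0$ puts $\ker(fgh)$ in $\mathcal{A}'$. For (2), the case $i\ge 1$ is immediate from the ${\rm Ext}^1$-vanishing in (F3). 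The case $i=0$ needs more work: I would invoke (1) on the epi $B\twoheadrightarrow X$ to obtain $g'\colon Q\twoheadrightarrow B$ with $Q\in\mathcal{P}$ and $K:=\ker(\pi g')\in\mathcal{A}'$; injectivity of $P$ in $\mathcal{A}'$ extends $a\circ g'|_K\colon K\to P$ to some $c\colon Q\to P$, and since $c$ vanishes on $\ker g'\subseteq K$ it descends to $b\colon B\to P$ with $bf=a$, using surjectivity of $g'|_K\colon K\to A$.

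The main obstacle is (3), which I would handle by a padding trick. Given $0\to A\to B\to F^i(X)\to 0$, set $M=\bigoplus_{j\ne i}F^j(X)$ and form the exact sequence
\begin{equation*}
0\longrightarrow A\longrightarrow B\oplus M\longrightarrow F^i(X)\oplus M={\rm Ind}(X)\longrightarrow 0.
\end{equation*}
By Lemma~\ref{lem:ind}, the structure map $c_X\colon F({\rm Ind}(X))\to {\rm Ind}(X)$ is a split monomorphism with cokernel $X\in\mathcal{A}'$, so the hypothesis of (F4) is met and $F$ preserves exactness of the padded sequence. Splitting off the identity summand $F(M)\xrightarrow{\rm id}F(M)$ from both sides leaves $0\to F(A)\to F(B)\to F^{i+1}(X)\to 0$ exact, which is $F(\eta)$.

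Item (4) then follows by induction on $i$, applying (3) to the sequences $0\to F^{j}(A)\to F^{j}(B)\to F^{j}(C)\to 0$ with $C={\rm Cok}(f)\in\mathcal{A}'$. For (5), one direct route is to apply (1) to $\pi\colon Y\to P$ in a short exact sequence $0\to X\to Y\xrightarrow{\pi}P\to 0$: this produces $g'\colon Q\to Y$ with $Q\in\mathcal{P}$ and $\ker(\pi g')\in\mathcal{A}'$, making $0\to\ker(\pi g')\to Q\to P\to 0$ a conflation in $\mathcal{A}'$ that splits by projectivity of $P$; composing the section with $g'$ splits the original sequence. A more elaborate argument that actively uses the $u$-hypothesis proceeds by descending induction on $j$ through the sequences $0\to F^{j+1}(X)\to F^j(X)\to F^j({\rm Cok}u)\to 0$ (obtained via (4)), with (F3) handling the intermediate vanishings and (1) still needed for the base case ${\rm Ext}^1(P,{\rm Cok}u)=0$.

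The reason I single out (3) as the obstacle is that it is the only step that is not formally routine: recognizing that the map $c_X$ built into the induction functor provides exactly the monomorphism required by (F4), and that adjoining the trivial $F(M)$-summand preserves $A$ while shifting the quotient to ${\rm Ind}(X)$, is the one observation that unlocks the whole lemma.
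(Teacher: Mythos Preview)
Your proposal is correct, and for the crucial step (3) you use exactly the paper's padding trick: replace the quotient $F^i(X)$ by ${\rm Ind}(X)$ via a direct-sum adjustment, then invoke (F4) through the structure map $c_X$. Parts (1) and (4) also match the paper.

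Two differences are worth recording. For (2) with $i=0$, the paper does not route through (1) at all: since $\mathcal{A}'$ is extension-closed in $\mathcal{A}$, any short exact sequence $0\to P\to E\to X\to 0$ with $P,X\in\mathcal{A}'$ already has $E\in\mathcal{A}'$, so injectivity of $P$ in $\mathcal{A}'$ gives ${\rm Ext}^1_{\mathcal{A}}(X,P)=0$ in one line. Your covering argument via $Q\twoheadrightarrow B$ works but is unnecessary.

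For (5), your first argument is genuinely different and in fact proves more: it shows that under (F1)+(F2) every $P\in\mathcal{P}$ is projective in $\mathcal{A}$, so ${\rm Ext}^1_{\mathcal{A}}(P,X)=0$ for \emph{all} $X$, and the monomorphism hypothesis on $u$ is never used. The paper instead gives your second argument (the filtration of $X$ by the $F^i({\rm Cok}\,u)$ via (4), then Ext-vanishing from (F3) together with the $i=0$ case). Your observation that (F1)+(F2) already forces $\mathcal{P}$ to consist of $\mathcal{A}$-projectives is a nice strengthening the paper does not make explicit; it also renders half of the vanishing hypotheses in (F3) redundant.
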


\begin{proof}
For (1), we just compose the morphism $g$ in (F2) with an epimorphism $P\rightarrow Y$, whose kernel lies in $\mathcal{A}'$. If $i=0$ in (2),  we use the fact that ${\rm Ext}_\mathcal{A}^1(X, P)=0$, since $P$ is also injective in $\mathcal{A}'$. If $i\geq 1$,  we just apply ${\rm Ext}_\mathcal{A}^1(X, F^i(P))=0$ in (F3).

 For (3), we may assume that $i\leq N$.  By adding a trivial direct summand, we may replace $\eta$ by  $\eta'\colon 0\rightarrow A\rightarrow B'\rightarrow {\rm Ind}(X)\rightarrow 0$ with $B'=B\oplus (\oplus_{j\neq i}F^j(X))$. By the exact sequence
$$0\longrightarrow F{\rm Ind}(X)\stackrel{c_X}\longrightarrow {\rm Ind}(X)\longrightarrow X\longrightarrow 0,$$
we might apply (F4) to $\eta'$. The exactness of $F(\eta')$ implies the one for $F(\eta)$.  The statement (4) follows from (3) and by induction.

For the last statement, we apply (4) to obtain that $F^i(u)$ is mono for each $i\geq 1$. The cokernel of $F^i(u)$ is isomorphic to $F^i({\rm Cok}u)$. By the nilpotency of $F$, we infer that $X$ is an iterated extension of the objects $F^i({\rm Cok}u)$ for $0\leq i\leq N$. By ${\rm Ext}^1_\mathcal{A}(P, F^i({\rm Cok}u))=0$ in (F3), we deduce the required statement.
\end{proof}

The following consideration is inspired by \cite{XZ, LZ}. We consider the following full subcategory of ${\rm rep}(F)$.
$$\mathcal{B}=\{(X, u) \in {\rm rep}(F)\; |\; u \mbox{  is a monomorphism with } {\rm Cok}u\in \mathcal{A}'\}.$$

\begin{thm}\label{thm:1}
Assume that the triple $(\mathcal{A}, \mathcal{A}', F)$ satisfies {\rm (F1)-(F4)}. Then $\mathcal{B}\subseteq {\rm rep}(F)$ is an exact subcategory, which is a Frobenius category. Moreover, its projective objects are precisely of the form $({\rm Ind}(P), c_P)$ for $P\in \mathcal{P}$.
\end{thm}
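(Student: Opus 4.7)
First, to see $\mathcal{B} \subseteq {\rm rep}(F)$ is extension-closed, I would take a short exact sequence $0 \to (X_1, u_1) \to (X_2, u_2) \to (X_3, u_3) \to 0$ with outer terms in $\mathcal{B}$. By (F4) applied to the underlying sequence — whose hypothesis holds because $X_3$ admits the prescribed $u_3$ with ${\rm Cok}(u_3) \in \mathcal{A}'$ — the top row of the comparison diagram with verticals $u_1, u_2, u_3$ remains short exact. A snake lemma then forces $u_2$ to be mono and produces an exact sequence $0 \to {\rm Cok}(u_1) \to {\rm Cok}(u_2) \to {\rm Cok}(u_3) \to 0$, whose middle term lies in $\mathcal{A}'$ by the extension-closure of $\mathcal{A}'$ in $\mathcal{A}$.

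Next, to show that $({\rm Ind}(P), c_P)$ is projective in $\mathcal{B}$ for $P \in \mathcal{P}$, I would exploit the adjunction of Lemma \ref{lem:adj}: a morphism out of $({\rm Ind}(P), c_P)$ is determined by a map $P \to X$ in $\mathcal{A}$, and lifting it along a deflation $(Y, v) \twoheadrightarrow (X, u)$ in $\mathcal{B}$ with kernel $(K, w) \in \mathcal{B}$ reduces to ${\rm Ext}^1_\mathcal{A}(P, K) = 0$, which is Lemma \ref{lem:defn}(5). For enough projectives, given $(X, u) \in \mathcal{B}$ with $A := {\rm Cok}(u) \in \mathcal{A}'$, I would pick a deflation $p'\colon P_0 \twoheadrightarrow A$ in $\mathcal{A}'$; using that $F(X)$ is an iterated extension of $F^i(A)$ for $i \geq 1$ (by Lemma \ref{lem:defn}(3)--(4)) together with (F3), the obstruction ${\rm Ext}^1_\mathcal{A}(P_0, F(X))$ vanishes, so $p'$ lifts to some $p\colon P_0 \to X$. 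The corresponding adjoint $\tilde p\colon ({\rm Ind}(P_0), c_{P_0}) \to (X, u)$ is surjective because the snake lemma (with $p'$ surjective) together with right exactness of $F$ exhibits ${\rm Cok}(\tilde p)$ as a quotient of $F({\rm Cok}(\tilde p))$, and iterating with $F^{N+1} = 0$ forces ${\rm Cok}(\tilde p) = 0$; a further snake argument on the resulting short exact sequence $0 \to \ker\tilde p \to {\rm Ind}(P_0) \to X \to 0$ (whose $F$-image stays exact by (F4)) places $\ker \tilde p$ in $\mathcal{B}$. Dually, for enough inflations I would choose $A \hookrightarrow I_0$ in $\mathcal{A}'$ with $I_0 \in \mathcal{P}$, set $\beta_0\colon X \twoheadrightarrow A \hookrightarrow I_0$, and iteratively construct $\beta_i\colon X \to F^i(I_0)$ satisfying $\beta_i u = F(\beta_{i-1})$ via ${\rm Ext}^1_\mathcal{A}(A, F^i(I_0)) = 0$ from (F3), with injectivity of the assembled $\tilde\beta$ coming from nilpotency.

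The main obstacle is the injectivity of $({\rm Ind}(P), c_P)$ in $\mathcal{B}$, since the adjunction of Lemma \ref{lem:adj} runs in the wrong direction. Given a conflation $0 \to ({\rm Ind}(P), c_P) \xrightarrow{\iota} (Y, v) \to (X, u) \to 0$ in $\mathcal{B}$, taking cokernels of the structure morphisms yields a short exact sequence $0 \to P \to B \to A \to 0$ in $\mathcal{A}'$, which splits since $P$ is injective in the Frobenius category $\mathcal{A}'$; fix a retraction $r\colon B \to P$. I would then assemble a retraction $\tilde r\colon (Y, v) \to ({\rm Ind}(P), c_P)$ componentwise. Start with $\tilde r_0 = r \circ (Y \twoheadrightarrow B)$, which satisfies $\tilde r_0 v = 0$. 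Inductively, given $\tilde r_{i-1}$, extend $F(\tilde r_{i-1})\colon F(Y) \to F^i(P)$ along $v$ to some $\tilde r_i\colon Y \to F^i(P)$ using ${\rm Ext}^1_\mathcal{A}(B, F^i(P)) = 0$ from (F3), and perturb $\tilde r_i$ by a summand of the form $-(\tilde r_i \iota|_P) \circ r \circ (Y \twoheadrightarrow B)$ to kill the unwanted contribution to $\tilde r_i \iota|_P$. The remaining identities $\tilde r_i \iota|_{F^j(P)} = \delta_{ij}\,\mathrm{id}$ for $j \geq 1$ then follow by induction from the representation-morphism identity $\iota|_{F^j(P)} = v \circ F(\iota|_{F^{j-1}(P)})$, yielding $\tilde r \iota = \mathrm{id}$.

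Finally, any projective object in $\mathcal{B}$ is a direct summand of some $({\rm Ind}(P_0), c_{P_0})$ by the enough-projectives statement. Lemma \ref{lem:ind} identifies such a summand as $({\rm Ind}(P'), c_{P'})$ for some object $P'$, which must be a direct summand of $P_0$ and hence lie in $\mathcal{P}$. Combined with the previous steps, this finishes the characterization of the projective-injective objects and shows $\mathcal{B}$ is Frobenius.
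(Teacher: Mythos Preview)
Your argument is correct and closely parallels the paper's four-step proof. Two points of comparison are worth noting. For enough projectives, the paper invokes Lemma~\ref{lem:defn}(1) to produce directly an epimorphism $P\to X$ with $P\in\mathcal{P}$ and the appropriate kernel condition on cokernels, whereas you first choose a deflation $P_0\twoheadrightarrow A$ in $\mathcal{A}'$ and then lift it through $X\twoheadrightarrow A$ using ${\rm Ext}^1_\mathcal{A}(P_0,F(X))=0$; both routes land on the same snake-lemma verification that the kernel lies in $\mathcal{B}$. For injectivity of $({\rm Ind}(P),c_P)$, the paper shows that ${\rm Hom}(-,({\rm Ind}(P),c_P))$ is exact on conflations by extending a given morphism $(X,u)\to({\rm Ind}(P),c_P)$ componentwise along an inflation, while you instead split an arbitrary conflation starting at $({\rm Ind}(P),c_P)$ by building a retraction componentwise; the inductive mechanism (extend along $v$ using the Ext-vanishing from (F3)/Lemma~\ref{lem:defn}(2), then correct the degree-zero contribution) is essentially the same in both. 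Your sketch of enough injectives is terser than the paper's---it omits the verification that the cokernel of $\tilde\beta$ lies in $\mathcal{B}$ and only gestures at injectivity ``from nilpotency''---but the paper fills this in exactly as one would expect, via Lemma~\ref{lem:defn}(4) and an explicit kernel-intersection computation showing $\bigcap_i\ker\beta_i={\rm Im}\,F^N(u)=0$.
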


\begin{proof}
\emph{Step 1} \; To show that $\mathcal{B}\subseteq {\rm rep}(F)$ is closed under extensions, we take an exact sequence $0\rightarrow (X, u)\rightarrow (Y, v)\rightarrow (Z, w)\rightarrow 0$ with $(X, u)$ and $(Z, w)$ in $\mathcal{B}$. Since the sequence $0\rightarrow F(X)\rightarrow F(Y)\rightarrow F(Z)\rightarrow 0$ is exact by (F4), we infer by the five lemma that $v$ is a monomorphism, whose cokernel is an extension of ${\rm Cok}w$ by ${\rm Cok}u$. Since $\mathcal{A}'\subseteq \mathcal{A}$ is closed under extensions, the cokernel of $v$ lies in $\mathcal{A}'$. This proves that $(Y, v)$ lies in $\mathcal{B}$.

\emph{Step 2} \;  We claim that $({\rm Ind}(P), c_P)$ is projective in $\mathcal{B}$ for $P\in \mathcal{P}$. Take an exact sequence $\xi\colon 0\rightarrow (X, u) \rightarrow (Y, v) \rightarrow (Z, w)\rightarrow 0$ in $\mathcal{B}$. By the adjoint pair in Lemma~\ref{lem:adj}, the sequence ${\rm Hom}_{{\rm rep}(F)}(({\rm Ind}(P), c_P), \xi)$  is isomorphic to ${\rm Hom}_\mathcal{A}(P, U(\xi))$. The latter is exact by ${\rm Ext}_\mathcal{A}^1(P, X)=0$ in Lemma \ref{lem:defn}(5). This proves the claim.

Let $(X, u)$ be an object in $\mathcal{B}$. Denote by $\pi\colon X\rightarrow {\rm Cok}u$ the projection. Then by Lemma \ref{lem:defn}(1), there is an epimorphism $f\colon P\rightarrow X$ with $P\in \mathcal{P}$ and ${\rm Ker}(\pi\circ f)\in \mathcal{A}'$. By the adjoint pair  in Lemma \ref{lem:adj}, the morphism $f$ corresponds to a morphism $f'\colon ({\rm Ind}(P), c_P)\rightarrow (X, u)$, which is clearly epic. Then we have an exact sequence in ${\rm rep}(F)$
$$0\longrightarrow (Y, v)\longrightarrow ({\rm Ind}(P), c_P)\stackrel{f'}\longrightarrow (X, u)\longrightarrow 0.$$
By (F4), we have that $0\rightarrow F(Y)\rightarrow F{\rm Ind}(P)\rightarrow F(X)\rightarrow 0$ is exact. Using the five lemma, we infer that $v$ is mono, whose cokernel is isomorphic to ${\rm Ker}(\pi\circ f)$ and thus lies in $\mathcal{A}'$. Hence $(Y, v)$ lies in $\mathcal{B}$. The above exact sequence shows that $\mathcal{B}$ has enough projective objects. Moreover, each projective object is a direct summand of $({\rm Ind}(P), c_P)$ for some $P\in \mathcal{P}$. Using Lemma \ref{lem:ind}, we infer that any projective object  is of the form $({\rm Ind}(Q), c_Q)$ for $Q\in \mathcal{P}$.

\emph{Step 3} \; We claim that $({\rm Ind}(P), c_P)$ is injective for each $P\in \mathcal{P}$. For this, we take an arbitrary exact sequence $0\rightarrow (X, u)\stackrel{f}\rightarrow (Y, v)\stackrel{g}\rightarrow (Z, w)\rightarrow 0$ in $\mathcal{B}$. Then we have the following exact commutative diagram
\[\xymatrix{
0\ar[r] & F(X)\ar[d]^-{u}\ar[r]^-{F(f)} & F(Y) \ar[d]^-{v}\ar[r]^-{F(g)} & F(Z)\ar[d]^-{w}\ar[r] & 0\\
0\ar[r] & X\ar[r]^-{f} \ar[d]^{\pi_1} & Y \ar[r]^-{g}\ar[d]^-{\pi_2} & Z\ar[r] \ar[d]^-{\pi_3} & 0\\
0\ar[r] & {\rm Cok}u\ar[r]^-{\bar{f}} & {\rm Cok}v \ar[r]^-{\bar{g}} & {\rm Cok}w\ar[r] & 0.
}\]
Take any morphism
$$(a_0, a_1, \cdots, a_N)^t\colon (X, u)\longrightarrow ({\rm Ind}(P), c_P),$$
 where ``$t$" denotes the transpose and $a_i\colon X\rightarrow F^i(P)$. We observe that $a_0\circ u=0$ and $a_i\circ u=F(a_{i-1})$ for $i\geq 1$. Then there exists a morphism $\bar{a}_0\colon {\rm Cok}u\rightarrow P$ with $a_0=\bar{a}_0\circ \pi_1$. By Lemma \ref{lem:defn}(2) we have a morphism $\bar{b}_0\colon  {\rm Cok}v\rightarrow P$ with $\bar{a}_0=\bar{b}_0\circ \bar{f}$. We set $b_0=\bar{b}_0\circ \pi_2\colon Y\rightarrow P$. Hence, we have $a_0=b_0\circ f$ and $b_0\circ v=0$.

Since ${\rm Cok}v$ lies in $\mathcal{A}'$, by Lemma \ref{lem:defn}(2) we have a morphism $b'_1\colon Y\rightarrow F(P)$ satisfying $F(b_0)=b'_1\circ v$. We have
$$(a_1-b'_1\circ f)\circ u=F(a_0)-b'_1\circ v \circ F(f)= F(a_0)-F(b_0)\circ F(f)=0.$$
 There exists $x\colon {\rm Cok}u\rightarrow F(P)$ satisfying $a_1-b'_1\circ f=x\circ \pi_1$. Applying Lemma \ref{lem:defn}(2) again, we have a morphism $y\colon {\rm Cok}v\rightarrow F(P)$ with $x=y\circ \bar{f}$. Set $b_1=b'_1+y\circ \pi_2$. Then we have $a_1=b_1\circ f$ and $b_1\circ v=F(b_0)$.

We iterate the above argument to construct $b_i\colon Y\rightarrow F^i(P)$ such that $a_i=b_i\circ f$ and $b_i\circ v=F(b_{i-1})$ hold for $i\geq 2$. Then we have the morphism
 $$(b_0, b_1, \cdots, b_N)^t\colon (Y, v)\longrightarrow ({\rm Ind}(P), c_P),$$
 which makes $(a_0, a_1, \cdots, a_N)^t$ factor though $f$, as required.

\emph{Step 4}\; For the final step, we construct for each object $(X, u)$ in $\mathcal{B}$ an exact sequence
$$0\rightarrow (X, u)\longrightarrow({\rm Ind}(P), c_P)\longrightarrow (Y, v)\longrightarrow 0$$
in $\mathcal{B}$ with $P\in \mathcal{P}$. Then we are done with the whole proof.

Denote by $\pi\colon X\rightarrow {\rm Cok}u$ the cokernel of $u$. We observe that $F^i(u)$ is mono by Lemma \ref{lem:defn}(4). In what follows,  we view $F^i(X)$ as a subobject of $X$.

Since $\mathcal{A}'$ is Frobenius, we take a monomorphism $\iota\colon {\rm Cok}u\rightarrow P$ with its cokernel in $\mathcal{A}'$. Set $a_0=\iota\circ \pi$. Then ${\rm Ker}a_0={\rm Im}u$. Since $F^i(\iota)$ is mono by Lemma \ref{lem:defn}(4), we infer that ${\rm Ker}F^i(a_0)={\rm Im}F^i(u)$ for all $i\geq 1$.

By Lemma \ref{lem:defn}(2), we have a morphism $a_1\colon X\rightarrow F(P)$ with $F(a_0)=a_1\circ u$. Then we have $${\rm Ker}a_1\cap {\rm Im}u={\rm Ker} F(a_0)={\rm Im}F(u).$$
 Similarly, we have a morphism $a_2\colon X\rightarrow F^2(P)$ with $F(a_1)=a_2\circ u$. Then we have
\begin{align*}
{\rm Ker}a_2\cap {\rm Im}u \cap {\rm Im}F(u)&={\rm Ker}F(a_1)\cap {\rm Im} F(u)\\
&={\rm Ker}F^2(a_0)\\
&={\rm Im}F^2(u).
\end{align*}
Here, the first and second equality follow from $F(a_1)=a_2\circ u$ and $F(a_0)=a_1\circ u$, respectively. We proceed in the same way to construct $a_i\colon X\rightarrow F^i(P)$. We observe that
$${\rm Ker}a_N\cap \cdots \cap {\rm Ker}a_1\cap {\rm Ker}a_0={\rm Im} F^N(u)=0.$$
This gives rise to a monomorphism
$$(a_0, a_1, \cdots, a_N)^t\colon (X, u)\longrightarrow ({\rm Ind}(P), c_P),$$
which induces $\iota$ by taking the cokernels. Denote its cokernel  by $(Y, v)$. By the snake lemma and the injectivity of $\iota$, we infer that $v$ is mono, whose cokernel coincides with the one of $\iota$ and thus lies in $\mathcal{A}'$. This proves the desired short exact sequence.
\end{proof}

\section{Gorenstein projective modules and admissible bimodules}
In this section, we study Gorenstein projective modules over a tensor ring. The reference for Gorenstein homological algebra is \cite{EJ}.

Throughout $R$ is a two-sided noetherian ring. We denote by $R\mbox{-mod}$ the abelian category of finitely generated left $R$-modules, and by $R\mbox{-proj}$ the full subcategory of projective modules. We identify right $R$-modules with left $R^{\rm op}$-modules, where $R^{\rm op}$ is  the opposite ring. Hence, $R^{\rm op}\mbox{-mod}$ denotes the category of finitely generated right $R$-modules.

An unbounded complex $P^\bullet$ of projective $R$-modules is \emph{totally acyclic} provided that it is acyclic and that its dual $(P^\bullet)^*={\rm Hom}_R(P^\bullet, R)$ is also acyclic. Recall that an $R$-module $G$ is \emph{Gorenstein projective} provided that there exists a totally acyclic complex $P^\bullet$ with its zeroth cocycle  $Z^0(P^\bullet)\simeq G$. The complex $P^\bullet$ is called a \emph{complete resolution} of $G$.

We denote by $R\mbox{-Gproj}$ the full subcategory of Gorenstein projective modules. We observe that $R\mbox{-proj}\subseteq R\mbox{-Gproj}$. It is well known that $R\mbox{-Gproj}\subseteq R\mbox{-mod}$ is closed under extensions. Moreover, as an exact category, it is Frobenius, whose projective objects are precisely  projective $R$-modules; compare \cite[Proposition 3.8]{Bel3}.

The following fact is standard.

\begin{prop}\label{prop:Gproj}
Let $\mathcal{C}$ be an exact subcategory of $R\mbox{-{\rm mod}}$ containing $R\mbox{-{\rm proj}}$. Assume that $\mathcal{C}$ is Frobenius, whose projective objects are precisely projective $R$-modules. Then $\mathcal{C}\subseteq R\mbox{-{\rm Gproj}}$.
\end{prop}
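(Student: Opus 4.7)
The plan is, given $X\in\mathcal{C}$, to exhibit a totally acyclic complex $P^\bullet$ of projective $R$-modules with $Z^0(P^\bullet)\simeq X$, which is precisely what it means for $X$ to be Gorenstein projective. The construction splits naturally into two halves: building $P^\bullet$, and verifying that the $R$-linear dual remains acyclic.

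First I would build $P^\bullet$. Because $\mathcal{C}$ is a Frobenius exact category whose projective-injective objects coincide with $R\mbox{-proj}$, it has enough projectives and enough injectives, and both classes equal $R\mbox{-proj}$. Applied to $X$, this yields conflations $0\to \Omega X\to P_0\to X\to 0$ and $0\to X\to P^0\to \Sigma X\to 0$ in $\mathcal{C}$ with $P_0, P^0\in R\mbox{-proj}$ and $\Omega X, \Sigma X\in\mathcal{C}$. Iterating the two constructions on $\Omega X$ and $\Sigma X$ and splicing the results gives an unbounded complex $P^\bullet$ of projective $R$-modules whose every cocycle lies in $\mathcal{C}$ and satisfies $Z^0(P^\bullet)\simeq X$. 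Since $\mathcal{C}\subseteq R\mbox{-mod}$ is an exact subcategory, each conflation is an honest short exact sequence of $R$-modules, so $P^\bullet$ is acyclic in $R\mbox{-mod}$.

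For total acyclicity it remains to show that ${\rm Hom}_R(P^\bullet, Q)$ is exact for every $Q\in R\mbox{-proj}$. Applying ${\rm Hom}_R(-, Q)$ to the short exact sequences $0\to Z^i(P^\bullet)\to P^i\to Z^{i+1}(P^\bullet)\to 0$, this reduces to a single Ext-vanishing statement:
$$ {\rm Ext}^1_R(Y, Q)=0 \quad \text{for all } Y\in \mathcal{C} \text{ and } Q\in R\mbox{-proj}. $$

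This Ext-vanishing is the main (though routine) step, and it is handled by transferring extensions from $R\mbox{-mod}$ to $\mathcal{C}$. Any short exact sequence $0\to Q\to E\to Y\to 0$ in $R\mbox{-mod}$ has middle term $E\in\mathcal{C}$, because $\mathcal{C}$ is closed under extensions in $R\mbox{-mod}$; hence it is a conflation in $\mathcal{C}$. Since $Q$ is injective in $\mathcal{C}$ by hypothesis, this conflation splits, yielding the desired vanishing. This completes the check of total acyclicity and shows $X\in R\mbox{-Gproj}$.
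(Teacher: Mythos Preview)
Your proof is correct and follows the same strategy as the paper: glue the projective resolution and the injective resolution of $X$ inside the Frobenius category $\mathcal{C}$ to obtain a complete resolution. The paper's proof is a one-line sketch, whereas you have spelled out the total acyclicity verification via the ${\rm Ext}^1_R(Y,Q)=0$ argument; that extra step is correct and is exactly what the paper leaves implicit.
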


\begin{proof}
For a module $C\in \mathcal{C}$, we construct its complete resolution by gluing the projective resolution and injective resolution of $C$ inside $\mathcal{C}$.
\end{proof}

For a left $R$-module $X$, we denote by ${\rm pd}_RX$, ${\rm id}_RX$ and ${\rm Gpd}_RX$ the projective dimension, injective dimension and Gorenstein projective dimension of $X$, respectively. Recall that ${\rm Gpd}_RX\leq n$ if and only if there is an exact sequence $0\rightarrow G^{-n} \rightarrow \cdots \rightarrow G^{-1}\rightarrow G^0\rightarrow X \rightarrow 0$ with each $G^{-i}\in R\mbox{-Gproj}$.

Recall that a two-sided noetherian ring $R$ is \emph{Gorenstein} provided that ${\rm id}_RR<\infty$ and ${\rm id}_{R^{\rm op}} R<\infty$. In this case, we have ${\rm id}_RR={\rm id}_{R^{\rm op}} R$ by \cite[Lemma A]{Z}. This common value is denoted by ${\rm G.dim}\; R$. If ${\rm G.dim}\; R\leq d$, we call $R$ a \emph{$d$-Gorenstein ring}. For example, $0$-Gorenstein rings are precisely quasi-Frobenius rings.

\begin{lem}\label{lem:goren}
Let $R$ be a two-sided noetherian ring, and $d, d_1, d_2$ be integers. Then the following statements hold.
\begin{enumerate}
\item If $R$ is $d$-Gorenstein, then ${\rm Gpd}_R X\leq d$ and ${\rm Gpd}_{R^{\rm op}} Y\leq d$ for each left $R$-module $X$ and right $R$-module $Y$.
    \item If ${\rm Gpd}_R X\leq d_1$ and ${\rm Gpd}_{R^{\rm op}} Y\leq d_2$ for each left $R$-module $X$ and right $R$-module $Y$, then $R$ is ${\rm min}\{d_1, d_2\}$-Gorenstein.
\end{enumerate}
\end{lem}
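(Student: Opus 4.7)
The plan is to prove both parts using the characterization of Gorenstein projective modules through syzygies, combined with three standard tools: Baer's criterion for injectivity, dimension-shifting in Ext, and Zaks' lemma (already cited).

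For part (1), I would fix a left $R$-module $X$, take any projective resolution
$$\cdots \to P^{-(d+1)} \to P^{-d} \to \cdots \to P^0 \to X \to 0,$$
and let $K$ denote the $d$-th syzygy. The goal is to show that $K$ is Gorenstein projective, which immediately yields ${\rm Gpd}_R X \leq d$. The vanishing ${\rm Ext}^i_R(K, R) = 0$ for all $i \geq 1$ follows by dimension-shifting from ${\rm id}_R R \leq d$. To assemble a complete resolution of $K$, I would splice the given projective resolution on the left with the $R$-dual of a projective resolution of $K^* = {\rm Hom}_R(K, R)$ on the right; the exactness of the dualized complex is supplied by ${\rm id}_{R^{\rm op}} R \leq d$, which holds because $R$ is $d$-Gorenstein. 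The argument for right modules is symmetric.

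For part (2), without loss of generality assume $d_1 \leq d_2$. I would aim to prove ${\rm id}_R R \leq d_1$ via Baer's criterion, so it suffices to verify ${\rm Ext}^{d_1+1}_R(R/J, R) = 0$ for every left ideal $J$. By hypothesis ${\rm Gpd}_R(R/J) \leq d_1$, so there is an exact sequence $0 \to G^{-d_1} \to \cdots \to G^0 \to R/J \to 0$ with each $G^{-i}$ Gorenstein projective. Every Gorenstein projective module $G$ satisfies ${\rm Ext}^i_R(G, R) = 0$ for $i \geq 1$, obtained by applying ${\rm Hom}_R(-, R)$ to a complete resolution of $G$. Dimension-shifting through the resolution of $R/J$ then delivers the required vanishing. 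The symmetric argument using the right bound yields ${\rm id}_{R^{\rm op}} R \leq d_2$, and Zaks' lemma forces these two finite injective dimensions to coincide, so both are at most $\min\{d_1, d_2\}$.

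The main obstacle lies in part (1), where producing the right half of the complete resolution of $K$ requires more than Ext-vanishing: one needs the dualized projective resolution of $K^*$ to be exact and the biduality $K \simeq K^{**}$ to hold, so that the splice is genuinely totally acyclic. These are the classical Auslander--Bridger conditions; over a $d$-Gorenstein ring they follow from the finite two-sided injective dimension together with a second round of dimension-shifting applied to $K^*$ as a right $R$-module. Part (2), by contrast, amounts to bookkeeping once the Ext-vanishing of Gorenstein projective modules and Zaks' lemma are in hand.
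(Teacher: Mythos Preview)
Your argument for part~(2) is correct and matches the paper's own proof exactly: the paper also reduces to ${\rm Ext}_R^i(X,R)=0$ for $i>{\rm Gpd}_RX$, deduces ${\rm id}_RR\leq d_1$ and ${\rm id}_{R^{\rm op}}R\leq d_2$, and then invokes Zaks' lemma. (The detour through Baer's criterion is harmless but unnecessary, since the hypothesis already covers all modules $X$, not just cyclic ones.)

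For part~(1) the paper gives no argument at all and simply cites \cite[Theorem~12.3.1]{EJ}, so your Auslander--Bridger outline is a genuine addition. It is the right framework, but one step is stated too loosely. You write that ``the exactness of the dualized complex is supplied by ${\rm id}_{R^{\rm op}}R\leq d$.'' Taken at face value this only yields ${\rm Ext}^i_{R^{\rm op}}(K^*,R)=0$ for $i>d$, whereas you need it for all $i\geq 1$. The missing observation is that $K^*$ is not an arbitrary right module: dualising the \emph{infinite} projective resolution of $K$ (which stays exact because ${\rm Ext}^i_R(K,R)=0$) produces an acyclic right complex
\[
0\longrightarrow K^*\longrightarrow P_d^*\longrightarrow P_{d+1}^*\longrightarrow\cdots,
\]
and it is by shifting \emph{forward} along this coresolution that one reaches degrees beyond $d$, where ${\rm id}_{R^{\rm op}}R\leq d$ then forces the vanishing. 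The same forward shift handles reflexivity via the vanishing of ${\rm Ext}^{1,2}_{R^{\rm op}}({\rm Tr}\,K,R)$. Your phrase ``a second round of dimension-shifting applied to $K^*$'' may well intend exactly this, but as written it could equally be read as shifting along a projective resolution of $K^*$, which does not suffice. Making the direction of the shift explicit closes the gap.
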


\begin{proof}
We refer to \cite[Theorem 12.3.1]{EJ} for a detailed proof. For (2), we just note that ${\rm Ext}_R^i(X, R)=0$ for $i>{\rm Gpd}_R X$. Hence, the assumptions imply that ${\rm id}_RR\leq d_1$ and  ${\rm id}_{R^{\rm op}}R\leq d_2$. Then we are done by using the fact  ${\rm id}_RR={\rm id}_{R^{\rm op}} R$ for any Gorenstein ring $R$.
\end{proof}

Let $M$ be an $R$-bimodule which is finitely generated on both sides. Write $M^{\otimes_R 0}=R$ and $M^{\otimes_R (j+1)}=M\otimes_R (M^{\otimes_R j})$ for $j\geq 0$. We say that $M$ is \emph{nilpotent}, if $M^{\otimes_R (N+1)}=0$ for some $N\geq 0$. This is equivalent to the condition that the  endofunctor $M\otimes_R-$ on $R\mbox{-mod}$ is nilpotent.

Let $M$ be a nilpotent bimodule. We denote by $T_R(M)=\bigoplus_{i=0}^\infty M^{\otimes_R i}$ the tensor ring, which is also two-sided noetherian. There is an isomorphism of categories
\begin{align}\label{equ:iso-cat}
{\rm rep}(M\otimes_R-)\stackrel{\sim}\longrightarrow T_R(M)\mbox{-mod},
\end{align}
 which identifies a  representation $(X, u)$ of $M\otimes_R-$ with  a left $T_R(M)$-module $X$ such that $m.x=u(m\otimes x)$ for $m\in M$ and $x\in X$. In particular, projective $T_R(M)$-modules correspond to the representations $({\rm Ind}(P), c_P)$ for projective $R$-modules $P$, which might also be viewed as the the scalar extension $T_R(M)\otimes_R P$. Indeed, for each left $R$-module $Z$, $({\rm Ind}(Z), c_Z)$ corresponds to $T_R(M)\otimes_R Z$.

  In what follows, we will identify the two categories in (\ref{equ:iso-cat}). The following results are standard.

 \begin{lem}\label{lem:pd}
 Keep the assumptions as above. Let $Z$ be a left $R$-module. Then the following statements hold.
 \begin{enumerate}
 \item The $R$-module $Z$ is projective if and only if the $T_R(M)$-module $T_R(M)\otimes_R Z$ is projective.
 \item If ${\rm Tor}_i^R(T_R(M), Z)=0$ for each $i\geq 1$, then ${\rm pd}_R Z={\rm pd}_{T_R(M)}T_R(M)\otimes_R Z$.
 \end{enumerate}
 \end{lem}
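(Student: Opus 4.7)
The plan is to combine the adjunction of Lemma~\ref{lem:adj} (which, under the identification~(\ref{equ:iso-cat}), is simply the usual extension-of-scalars/restriction adjunction for the inclusion $R\hookrightarrow T_R(M)$) with the ``trivial extension'' functor $\iota\colon R\mbox{-mod}\rightarrow T_R(M)\mbox{-mod}$ that sends $Y$ to the representation $(Y, 0)$, with $M$ acting by zero. Two properties of $\iota$ will be used repeatedly. First, $\iota$ is exact, since exactness in ${\rm rep}(M\otimes_R -)$ can be checked on underlying $R$-modules. Second, the restriction of $\iota(Y)$ back to $R$ is simply $Y$, so the adjunction specialises to a natural isomorphism
\[
{\rm Hom}_{T_R(M)}(T_R(M)\otimes_R Z,\, \iota(Y))\simeq {\rm Hom}_R(Z, Y).
\]

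For statement (1), the ``only if'' direction is the standard fact that the left adjoint $T_R(M)\otimes_R -$ of the exact restriction functor sends projectives to projectives. For the ``if'' direction, assume that $T_R(M)\otimes_R Z$ is projective over $T_R(M)$. Then ${\rm Hom}_{T_R(M)}(T_R(M)\otimes_R Z, -)$ is exact on $T_R(M)\mbox{-mod}$; precomposing with the exact functor $\iota$ and using the displayed isomorphism yields that ${\rm Hom}_R(Z, -)$ is exact on $R\mbox{-mod}$, so $Z$ is projective.

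For statement (2), the Tor-vanishing hypothesis is exactly what is needed to transport projective resolutions through $T_R(M)\otimes_R -$. Take any projective resolution $P^\bullet\rightarrow Z$ over $R$. The hypothesis ${\rm Tor}^R_i(T_R(M), Z)=0$ for $i\geq 1$ forces the higher homology of $T_R(M)\otimes_R P^\bullet$ to vanish, so this complex is a resolution of $T_R(M)\otimes_R Z$; by (1) its terms are projective over $T_R(M)$. This immediately yields the inequality ${\rm pd}_{T_R(M)}(T_R(M)\otimes_R Z)\leq {\rm pd}_R Z$. For the reverse inequality, apply ${\rm Hom}_{T_R(M)}(-, \iota(Y))$ to the same resolution and use the adjunction degree by degree to obtain the identification
\[
{\rm Ext}^i_{T_R(M)}(T_R(M)\otimes_R Z,\, \iota(Y))\simeq {\rm Ext}^i_R(Z, Y),
\]
valid for every $i\geq 0$ and every $R$-module $Y$. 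Choosing $i$ larger than ${\rm pd}_{T_R(M)}(T_R(M)\otimes_R Z)$ then forces ${\rm Ext}^i_R(Z, Y)=0$ for all such $Y$, whence ${\rm pd}_R Z\leq {\rm pd}_{T_R(M)}(T_R(M)\otimes_R Z)$.

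There is no real obstacle here: the lemma is essentially a formal consequence of the extension/restriction adjunction combined with the exact trivial-extension functor $\iota$. The only mildly subtle point is that in part (2) the Tor-vanishing hypothesis is genuinely used, both to guarantee that $T_R(M)\otimes_R P^\bullet$ is an honest resolution and (through that) to justify the degree-wise Ext identification on which the second inequality hinges.
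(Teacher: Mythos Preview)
Your proof is correct. The paper takes a slightly different route. For (1), instead of your trivial-extension functor $\iota$, the paper uses the explicit description $T_R(M)\otimes_R Z\simeq ({\rm Ind}(Z), c_Z)$: if this is a direct summand of some $({\rm Ind}(R^n), c_{R^n})$, then applying the cokernel-of-structure-map functor recovers $Z$ as a direct summand of $R^n$. For (2), the paper runs a syzygy argument: truncate a projective resolution of $Z$ at some length with kernel $Y$, tensor up (exactness by the Tor hypothesis), and then apply (1) to $Y$ to conclude that $Y$ is $R$-projective if and only if $T_R(M)\otimes_R Y$ is $T_R(M)$-projective, which settles both inequalities simultaneously. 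Your approach is more purely adjunction-based and avoids the syzygy shift; the paper's is slightly more hands-on but a touch shorter for part (2). Conceptually the two are dual: the paper detects projectivity of $Z$ by pushing through the cokernel functor $(X,u)\mapsto {\rm Cok}\,u$ (which is $R\otimes_{T_R(M)}-$ along the augmentation $T_R(M)\twoheadrightarrow R$), while you detect it by pulling back through that functor's right adjoint $\iota$.
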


\begin{proof}
For (1), it suffices to show the ``if" part. Recall the identification of  $({\rm Ind}(Z), c_Z)$ and $T_R(M)\otimes_R Z$. Assume that $T_R(M)\otimes_R Z$ is projective. Hence, $({\rm Ind}(Z), c_Z)$ is a direct summand of $({\rm Ind}(R^n), c_{R^n})$ for some $n\geq 1$.  It follows that $Z$ is projective, since it is isomorphic to the cokernel of $c_Z$.

For (2),  we take an exact sequence
$$0 \rightarrow Y\rightarrow P^{-n}\rightarrow P^{1-n}\rightarrow \cdots \rightarrow P^{-1}\rightarrow P^0\rightarrow Z\rightarrow 0$$
of $R$-modules with each $P^{-i}$ projective. Applying $T_R(M)\otimes_R-$ to it, we get an exact sequence starting at $T_R(M)\otimes_R Y$, with middle terms projective $T_R(M)$-modules.  We apply (1) to $Y$. Then we are done.
\end{proof}

\begin{lem}\label{lem:dual}
We have a natural isomorphism $({\rm Ind}(P), c_P)^* \simeq ({\rm Ind}(P^*), c_{P^*})$ of $T_R(M)^{\rm op}$-modules for each $P\in R\mbox{-{\rm proj}}$.
\end{lem}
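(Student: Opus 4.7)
The plan is to chain three natural isomorphisms to pass from $({\rm Ind}(P), c_P)^*$ to $({\rm Ind}(P^*), c_{P^*})$, throughout exploiting the identification ${\rm Ind}(P) \cong T_R(M) \otimes_R P$ of left $T_R(M)$-modules recorded just before the lemma.

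First, I would apply the tensor-hom adjunction to obtain the isomorphism of right $T_R(M)$-modules
$${\rm Hom}_{T_R(M)}(T_R(M) \otimes_R P,\, T_R(M)) \;\cong\; {\rm Hom}_R(P,\, T_R(M)),$$
where the right $T_R(M)$-action on both sides is induced by right multiplication on the target. Next, since $P$ is finitely generated projective as a left $R$-module, I would invoke the standard evaluation isomorphism
$$P^* \otimes_R T_R(M) \;\stackrel{\sim}{\longrightarrow}\; {\rm Hom}_R(P,\, T_R(M)), \qquad \varphi \otimes t \longmapsto \bigl(p \mapsto \varphi(p)\,t\bigr),$$
of right $T_R(M)$-modules; this is immediate for $P = R$ and extends to $R^n$ and its direct summands by naturality. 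Finally, nilpotency of $M$ yields the finite decomposition $P^* \otimes_R T_R(M) = \bigoplus_{i=0}^{N} P^* \otimes_R M^{\otimes_R i}$, and right $T_R(M)$-modules can be identified with representations $(Y, u \colon Y \otimes_R M \to Y)$ of the nilpotent endofunctor $- \otimes_R M$ on $R^{\rm op}\mbox{-mod}$, entirely in the manner of (\ref{equ:iso-cat}); under this identification, $P^* \otimes_R T_R(M)$ becomes precisely the representation $({\rm Ind}(P^*), c_{P^*})$.

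Composing these three isomorphisms will yield the claim, with naturality in $P$ automatic. The only obstacle I foresee is bookkeeping: one must consistently track which side each $R$- and $T_R(M)$-action lives on, noting that the left-module ${\rm Ind}$ uses the functor $M \otimes_R -$ while its right-module analogue uses $- \otimes_R M$, so that the slogan ``$(-)^*$ exchanges the two conventions'' becomes a precise statement. Once the sides are fixed, each step is routine.
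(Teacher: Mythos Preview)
Your proposal is correct and follows essentially the same route as the paper: identify $({\rm Ind}(P),c_P)$ with $T_R(M)\otimes_R P$, apply tensor--hom adjunction to get ${\rm Hom}_R(P,T_R(M))$, use the evaluation isomorphism $P^*\otimes_R T_R(M)\simeq {\rm Hom}_R(P,T_R(M))$ valid for finitely generated projective $P$, and then identify $P^*\otimes_R T_R(M)$ with $({\rm Ind}(P^*),c_{P^*})$. The paper presents this as a bare chain of four isomorphisms with one explanatory sentence; your additional remarks about tracking sides and the two analogues of ${\rm Ind}$ are reasonable care but not a different idea.
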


\begin{proof}
We are done by the following canonical isomorphisms
\begin{align*}
({\rm Ind}(P), c_P)^* &={\rm Hom}_{T_R(M)} (T_R(M)\otimes_R P, T_R(M))\\
                     &\simeq {\rm Hom}_R(P, T_R(M))\\
                     & \simeq {\rm Hom}_R(P, R)\otimes_R T_R(M)\\
                     & \simeq ({\rm Ind}(P^*), c_{P^*}).
\end{align*}
As mentioned above,  we identify $({\rm Ind}(P), c_P)$ with $T_R(M)\otimes_R P$, and $({\rm Ind}(P^*), c_{P^*})$ with $P^*\otimes_R T_R(M)$.
\end{proof}

\begin{defn}
An $R$-bimodule $M$ is \emph{left-admissible} provided that $${\rm Ext}_R^1(G, M^{\otimes_R i})=0=
{\rm Tor}_1^R(M, M^{\otimes_R i}\otimes_R G)$$
  for each $G\in R\mbox{-{\rm Gproj}}$ and $i\geq 0$. Dually, it is \emph{right-admissible} if $M$ is left-admissible replacing $R$ by its opposite. We say that $M$ is \emph{admissible} if it is both left and right-admissible.\hfill $\square$
\end{defn}

\begin{lem}\label{lem:Gproj}
Let $M$ be an $R$-bimodule. Then the triple $(R\mbox{-{\rm mod}}, R\mbox{-{\rm Gproj}}, M\otimes_R-)$ satisfies {\rm (F1)-(F4)} if and only if $M$ is nilpotent and left-admissible.
\end{lem}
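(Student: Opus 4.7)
The plan is to check that, in the present setting where $\mathcal{A}=R\mbox{-mod}$, $\mathcal{A}'=R\mbox{-Gproj}$ and $F=M\otimes_R-$, the four axioms match up cleanly with the hypotheses on $M$: (F1) and (F2) are automatic from the classical theory of Gorenstein projective modules, (F3) simultaneously encodes nilpotency of $M$ and the Ext half of left-admissibility, and (F4) encodes the Tor half.

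For the ``only if'' direction I assume (F1)--(F4). Nilpotency of $M$ is read off directly from nilpotency of $F$ in (F3). The Ext vanishing ${\rm Ext}_R^1(G, M^{\otimes_R i})=0$ with $G\in R\mbox{-Gproj}$ follows for $i=0$ from the defining property ${\rm Ext}_R^1(G,R)=0$ of Gorenstein projective modules, and for $i\ge 1$ from (F3) applied with $P=R$, using $F^i(R)=M^{\otimes_R i}$. To get ${\rm Tor}_1^R(M, M^{\otimes_R i}\otimes_R G)=0$ I would take the underlying $R$-module of $({\rm Ind}(G), c_G)$: the structure map $c_G\colon M\otimes_R {\rm Ind}(G)\to {\rm Ind}(G)$ is a split monomorphism with cokernel precisely $G\in R\mbox{-Gproj}$, so (F4), applied to any short exact sequence ending at ${\rm Ind}(G)$, forces ${\rm Tor}_1^R(M, {\rm Ind}(G))=0$. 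The decomposition ${\rm Ind}(G)=\bigoplus_{i=0}^N M^{\otimes_R i}\otimes_R G$ then splits this vanishing into the required summands.

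For the ``if'' direction, assume $M$ is nilpotent and left-admissible. (F1) is the standard fact that $R\mbox{-Gproj}\subseteq R\mbox{-mod}$ is extension-closed and Frobenius with projective objects $R\mbox{-proj}$. (F2) holds by choosing $Y$ projective surjecting onto $A$; the composed epimorphism onto the Gorenstein projective $X$ has a Gorenstein projective kernel, by the usual syzygy property. In (F3), right-exactness is automatic, ${\rm Ext}_R^1(P, F^i(X))=0$ is free because $P$ is projective, and ${\rm Ext}_R^1(X, M^{\otimes_R i}\otimes_R P)=0$ reduces to the Ext half of left-admissibility after writing $P$ as a direct summand of a free module. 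The real work is (F4). Given $\eta\colon 0\to A\to B\to X\to 0$ together with a monomorphism $u\colon M\otimes_R X\to X$ whose cokernel $G$ lies in $R\mbox{-Gproj}$, I aim at ${\rm Tor}_1^R(M,X)=0$. My plan is to construct, by induction on $k$, short exact sequences
\[
0\longrightarrow M^{\otimes_R(k+1)}\otimes_R X\longrightarrow M^{\otimes_R k}\otimes_R X\longrightarrow M^{\otimes_R k}\otimes_R G\longrightarrow 0,
\]
starting from the given base case $k=0$. The step from $k$ to $k+1$ is obtained by tensoring the $k$-th sequence with $M\otimes_R-$: the Tor long exact sequence combined with ${\rm Tor}_1^R(M, M^{\otimes_R k}\otimes_R G)=0$ from left-admissibility forces the new leftmost arrow to be monic. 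Nilpotency of $M$ truncates the construction, after which a downward induction on $k$ using the Tor long exact sequence of the same short exact sequences---and, once more, the left-admissibility vanishings---yields ${\rm Tor}_1^R(M, M^{\otimes_R k}\otimes_R X)=0$ for every $k\ge 0$; the case $k=0$ is the required conclusion.

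The main obstacle, and essentially the only step beyond routine bookkeeping, is the filtration argument in (F4): one must verify that the Tor-vanishing afforded by left-admissibility is precisely calibrated to serve twice---first propagating the short exact sequences forward while keeping their cokernels of the form $M^{\otimes_R k}\otimes_R G$ with $G\in R\mbox{-Gproj}$, and second collapsing Tor backwards along the resulting filtration. Everything else is either a standard fact about Gorenstein projective modules or a direct reading of (F1)--(F4) against the definition of left-admissibility.
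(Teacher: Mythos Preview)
Your proof is correct and follows essentially the same route as the paper's. In particular, your ``only if'' extraction of the Tor-vanishing by applying (F4) to ${\rm Ind}(G)$ is exactly what the paper does implicitly through Lemma~\ref{lem:defn}(3) (whose proof reduces to (F4) on ${\rm Ind}(X)$ by adding trivial summands), and your filtration argument for (F4) in the ``if'' direction is the same iterated-extension argument the paper uses, just written out more explicitly.
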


\begin{proof}
For the ``only if" part, it suffices to infer ${\rm Tor}_1^R(M, (M^{\otimes_R i})\otimes_R G)=0$ from Lemma \ref{lem:defn}(3).

For the ``if" part, we have that (F2) holds, since $R\mbox{-Gproj}$ is closed under kernels of epimorphisms. It remains to verify (F4). Assume that an $R$-module $X$ fits into a short exact sequence $\eta\colon 0\rightarrow M\otimes_R X\rightarrow X\rightarrow G\rightarrow 0$ with $G$ Gorenstein projective. By assumption, we infer that $(M^{\otimes_R i})\otimes_R \eta$ is exact for each $i$. Recall that $M$ is nilpotent. It follows that $X$ is an iterated extension of the  modules $(M^{\otimes_R i})\otimes_R G$. By the Tor-vanishing assumption, we infer that ${\rm Tor}^R_1(M, X)=0$. This proves (F4).
\end{proof}

We introduce the following full subcategories of $T_R(M)\mbox{-mod}$ and $T_R(M)^{\rm op}\mbox{-mod}$, respectively
$${\rm Gmon}(M\otimes_R -):=\{(X, u)\in {\rm rep}(M\otimes_R -)\; |\; u \mbox{ is mono with } {\rm Cok}u\in R\mbox{-{\rm Gproj}} \}$$
and
$${\rm Gmon}(-\otimes_R M):=\{(Y, v)\in {\rm rep}(-\otimes_R M)\; |\; v \mbox{ is mono with } {\rm Cok}v\in R^{\rm op}\mbox{-{\rm Gproj}} \}.$$

\begin{lem}\label{lem:incl}
Let $R$ be a two-sided noetherian ring. Assume that $M$ is a nilpotent left-admissible $R$-bimodule. Then we have ${\rm Gmon}(M\otimes_R -) \subseteq T_R(M)\mbox{-{\rm Gproj}}$.
\end{lem}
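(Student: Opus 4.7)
The plan is to combine Theorem~\ref{thm:1}, Lemma~\ref{lem:Gproj} and Proposition~\ref{prop:Gproj}, transporting the Frobenius structure on ${\rm Gmon}(M\otimes_R-)$ across the category isomorphism~(\ref{equ:iso-cat}). The heavy lifting is already done by Theorem~\ref{thm:1}; what remains is essentially bookkeeping to identify two descriptions of projective objects.

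Since $M$ is nilpotent and left-admissible, Lemma~\ref{lem:Gproj} gives that the triple $(R\mbox{-mod}, R\mbox{-Gproj}, M\otimes_R-)$ satisfies (F1)--(F4). Applying Theorem~\ref{thm:1} to this triple, I obtain that $\mathcal{B} := {\rm Gmon}(M\otimes_R-)$ is an exact subcategory of ${\rm rep}(M\otimes_R-)$ which is Frobenius, with projective objects precisely $({\rm Ind}(P), c_P)$ for $P \in R\mbox{-proj}$.

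Next I would transfer this along~(\ref{equ:iso-cat}) to view $\mathcal{B}$ as a full subcategory of $T_R(M)\mbox{-mod}$. Under this identification $({\rm Ind}(P), c_P)$ corresponds to $T_R(M)\otimes_R P$. By Lemma~\ref{lem:pd}(1), together with the fact that every projective $T_R(M)$-module is a direct summand of a free module $T_R(M)^n = T_R(M)\otimes_R R^n$, the modules $T_R(M)\otimes_R P$ with $P\in R\mbox{-proj}$ are exactly the projective $T_R(M)$-modules. In particular, $T_R(M)\mbox{-proj}\subseteq \mathcal{B}$ (note that the regular module $T_R(M)=T_R(M)\otimes_R R$ corresponds to $({\rm Ind}(R), c_R)$, whose structure morphism is a monomorphism with cokernel $R \in R\mbox{-Gproj}$). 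Hence $\mathcal{B}$ satisfies the hypotheses of Proposition~\ref{prop:Gproj} as a subcategory of $T_R(M)\mbox{-mod}$, and that proposition immediately yields $\mathcal{B}\subseteq T_R(M)\mbox{-Gproj}$, as required.

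There is no real obstacle, only a matter of checking that the Frobenius category produced abstractly by Theorem~\ref{thm:1} has, after the categorical identification, the right projective class to apply Proposition~\ref{prop:Gproj}; this matching is guaranteed by Lemma~\ref{lem:pd}(1). The content of the statement is essentially the realization that Theorem~\ref{thm:1}, specialized via Lemma~\ref{lem:Gproj}, constructs precisely the Frobenius subcategory to which the standard gluing argument behind Proposition~\ref{prop:Gproj} can be applied.
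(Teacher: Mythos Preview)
Your proof is correct and follows exactly the same approach as the paper's: apply Lemma~\ref{lem:Gproj} and Theorem~\ref{thm:1} to see that ${\rm Gmon}(M\otimes_R-)$ is Frobenius with projective objects $({\rm Ind}(P),c_P)$, identify these with projective $T_R(M)$-modules via~(\ref{equ:iso-cat}), and conclude by Proposition~\ref{prop:Gproj}. The paper's version is simply terser, relying on the identification of projectives already noted after~(\ref{equ:iso-cat}) rather than invoking Lemma~\ref{lem:pd}(1) explicitly.
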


\begin{proof}
Recall the isomorphism (\ref{equ:iso-cat}), where projective $T_R(M)$-modules are identified with $({\rm Ind}(P), c_P)$ for projective $R$-modules $P$.  By Theorem \ref{thm:1} and Lemma \ref{lem:Gproj}, the category ${\rm Gmon}(M\otimes_R -)$ is Frobenius. Then the required inclusion follows from Proposition \ref{prop:Gproj}.
\end{proof}

\begin{prop}\label{prop:incl}
Let $R$ be a two-sided noetherian ring, and $M$ be a nilpotent $R$-bimodule. Then the following two statements are equivalent:
 \begin{enumerate}
 \item $T_R(M)\mbox{-{\rm Gproj}}\subseteq {\rm Gmon}(M\otimes_R -)$ and $T_R(M)^{\rm op}\mbox{-{\rm Gproj}}\subseteq {\rm Gmon}(-\otimes_R M)$;
 \item for any totally acyclic complex $Q^\bullet$ of $T_R(M)$-modules and its dual $(Q^\bullet)^*={\rm Hom}_{T_R(M)}(Q^\bullet, T_R(M))$, the complexes $M\otimes_R Q^\bullet$ and $(Q^\bullet)^*\otimes_R M$ are both acyclic.
     \end{enumerate}
\end{prop}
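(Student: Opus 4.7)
The plan is to exhibit, for any complex $Q^\bullet$ of projective $T_R(M)$-modules with $Q^j\simeq T_R(M)\otimes_R P^j$, two degreewise split short exact sequences of complexes of $R$-modules
\begin{align*}
0\longrightarrow M\otimes_R Q^\bullet\longrightarrow Q^\bullet\longrightarrow P^\bullet\longrightarrow 0
\end{align*}
and, using Lemma \ref{lem:dual} to write $(Q^j)^*\simeq (P^j)^*\otimes_R T_R(M)$,
\begin{align*}
0\longrightarrow(Q^\bullet)^*\otimes_R M\longrightarrow(Q^\bullet)^*\longrightarrow(P^\bullet)^*\longrightarrow 0
\end{align*}
of complexes of right $R$-modules. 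Here $P^\bullet$ is the complex of ``tops'', the first sequence is built from the termwise structure maps $u_{Q^j}\colon M\otimes_R Q^j\to Q^j$ (each a split monomorphism with cokernel $P^j$), and the second is its right-module analogue obtained from $v_{(Q^j)^*}$. Both implications reduce to comparing the acyclicity of the three complexes in these sequences.

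For $(2)\Rightarrow(1)$, let $G\in T_R(M)\mbox{-{\rm Gproj}}$ with complete resolution $Q^\bullet$. By (2), $M\otimes_R Q^\bullet$ is acyclic, and since $Q^\bullet$ is acyclic, so is $P^\bullet$. The right-exact sequence $M\otimes_R Q^{-2}\to M\otimes_R Q^{-1}\to M\otimes_R G\to 0$ combined with acyclicity of $M\otimes_R Q^\bullet$ at degree $-1$ identifies $M\otimes_R G$ with ${\rm Im}(M\otimes_R Q^{-1}\to M\otimes_R Q^0)=Z^0(M\otimes_R Q^\bullet)$; naturality of the action shows this identification intertwines $u_G$ with the inclusion into $Q^0$ induced by $u_{Q^0}$, so $u_G$ is a monomorphism. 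The cocycle sequence of the first SES then reads $0\to M\otimes_R G\to G\to Z^0(P^\bullet)\to 0$, giving ${\rm Cok}\,u_G\simeq Z^0(P^\bullet)$. By (2) and total acyclicity of $Q^\bullet$, both $(Q^\bullet)^*$ and $(Q^\bullet)^*\otimes_R M$ are acyclic, which via the second SES forces $(P^\bullet)^*$ to be acyclic; thus $P^\bullet$ is totally acyclic over $R$ and ${\rm Cok}\,u_G\in R\mbox{-{\rm Gproj}}$. Hence $G\in{\rm Gmon}(M\otimes_R-)$. The second containment in (1) follows by the symmetric argument applied to the $(-)^*$-dual of a complete resolution of a right Gorenstein projective $T_R(M)$-module.

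For $(1)\Rightarrow(2)$, let $Q^\bullet$ be totally acyclic, so every cocycle $Z^j$ is Gorenstein projective over $T_R(M)$; by (1), $u_{Z^j}\colon M\otimes_R Z^j\to Z^j$ is a monomorphism. Since $\iota^j\colon Z^j\hookrightarrow Q^j$ is $T_R(M)$-linear, naturality gives $u_{Q^j}\circ(M\otimes_R\iota^j)=\iota^j\circ u_{Z^j}$; the right-hand side is a composite of monomorphisms, and $u_{Q^j}$ is itself mono, so the canonical map $M\otimes_R Z^j\to M\otimes_R Q^j$ is injective. Applying $M\otimes_R-$ to $0\to Z^j\to Q^j\to Z^{j+1}\to 0$ therefore yields a short exact sequence of $R$-modules for each $j$, and these sequences glue to prove $M\otimes_R Q^\bullet$ is acyclic; the acyclicity of $(Q^\bullet)^*\otimes_R M$ follows symmetrically from the second containment in (1) applied to the cocycles of the totally acyclic right complex $(Q^\bullet)^*$. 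The main obstacle is the identification $M\otimes_R G\simeq Z^0(M\otimes_R Q^\bullet)$ in $(2)\Rightarrow(1)$, since it requires exploiting the acyclicity hypothesis in several neighboring degrees of the complete resolution rather than merely at degree zero; once this and the two ambient split SESs of complexes are in place, both directions amount to standard cocycle chasing in degreewise split exact sequences of acyclic complexes.
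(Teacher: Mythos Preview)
Your proposal is correct and follows essentially the same route as the paper. Both arguments hinge on the degreewise split short exact sequence of complexes $0\to M\otimes_R Q^\bullet\to Q^\bullet\to P^\bullet\to 0$ (and its right-module analogue via Lemma~\ref{lem:dual}), comparing acyclicity of the three terms and reading off the cocycle sequence at degree zero; your treatment of $(2)\Rightarrow(1)$ is in fact slightly more explicit than the paper's in justifying the identification $M\otimes_R G\simeq Z^0(M\otimes_R Q^\bullet)$ and hence the injectivity of $u_G$.
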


\begin{proof}
For ``$(1)\Rightarrow (2)$", we take a totally acyclic complex  $Q^\bullet$ of $T_R(M)$-modules. We assume that $Q^i=({\rm Ind}(P^i), c_{P^i})$ for $P^i\in R\mbox{-proj}$. The $i$-th cocycle of $Q^\bullet$ is denoted by $(Z^i, u^i)$, which is Gorenstein projective. In particular, $u^i$ is injective by the assumption. It follows that the upper row of the following commutative diagram is exact.
\[
\xymatrix{
0\ar[r] & M\otimes_R Z^i \ar[d]_{u^i} \ar[r] & M\otimes_R {\rm Ind}(P^i) \ar[d]^-{c_{P^i}}\ar[r] & M\otimes_R Z^{i+1} \ar[r] \ar[d]^-{u^{i+1}} & 0\\
0\ar[r] & Z^i \ar[r] & {\rm Ind}(P^i) \ar[r] & Z^{i+1} \ar[r] &0
}
\]
Then we infer that $M\otimes_R Q^\bullet$ is acyclic. Since $(Q^\bullet)^*$ is also totally acyclic, the same argument proves that $(Q^\bullet)^*\otimes_R M$ is acyclic.

To prove ``$(2)\Rightarrow (1)$", we only prove the first inclusion. Take $(X, u)\in T_R(M)\mbox{-Gproj}$. Assume that its complete resolution is $Q^\bullet$, where $Q^i=({\rm Ind}(P^i), c_{P^i})$  for projective $R$-modules $P^i$. This gives rise to a commutative exact diagram.
\[
\xymatrix{\cdots \ar[r] & M\otimes_R {\rm Ind}(P^{-1}) \ar[d]^-{c_{P^{-1}}}\ar[r] & M\otimes_R{\rm Ind}(P^{-0}) \ar[d]^-{c_{P^0}} \ar[r] & M\otimes_R{\rm Ind}(P^{1}) \ar[d]^-{c_{P^1}}\ar[r] & \cdots\\
\cdots \ar[r] & {\rm Ind}P^{-1} \ar[d] \ar[r] & {\rm Ind}(P^{-0}) \ar[d] \ar[r] & {\rm Ind}(P^{1}) \ar[d] \ar[r] & \cdots\\
\cdots \ar[r] & P^{-1} \ar[r] & P^{-0} \ar[r] & P^{1} \ar[r] & \cdots
}\]
It follows that $u$ is mono and  the bottom row $P^\bullet$ is acyclic, whose zeroth cocycle is isomorphic to ${\rm Cok}u$.

We consider the dual $(Q^\bullet)^*$, whose components are isomorphic to $({\rm Ind}(P^i)^*, c_{{P^i}^*})$ by Lemma \ref{lem:dual}. Applying the same argument to $(Q^\bullet)^*$, we obtain a commutative exact diagram in $R^{\rm op}\mbox{-mod}$, whose bottom row is exact. But, the bottom  row is isomorphic to $(P^\bullet)^*$, proving the totally acyclicity of $P^\bullet$. Consequently, ${\rm Cok}u$ is Gorenstein projective. It follows that $(X, u)\in {\rm Gmon}(M\otimes_R -)$, as required.
\end{proof}

We summarize the results in this section. Recall that a two-sided noetherian ring $R$ is \emph{CM-free} provided that $R\mbox{-Gproj}=R\mbox{-proj}$. We mention that Gorenstein projective modules are also called maximal Cohen-Macaulay modules. Here, CM stands for Cohen-Macaulay.

\begin{thm}\label{thm:2}
Let $R$ be a two-sided noetherian ring with $M$ a nilpotent admissible $R$-bimodule satisfying the following condition: for any totally acyclic complex $Q^\bullet$ of $T_R(M)$-modules, the complexes $M\otimes_R Q^\bullet$ and ${\rm Hom}_{T_R(M)}(Q^\bullet, T_R(M))\otimes_R M$ are both acyclic. Then we have
$$T_R(M)\mbox{-{\rm Gproj}}= {\rm Gmon}(M\otimes_R -) \mbox{  and  } T_R(M)^{\rm op}\mbox{-{\rm Gproj}}= {\rm Gmon}(-\otimes_R M).$$
Consequently, for any left $R$-module $Z$, the following statements hold.
\begin{enumerate}
\item The module $Z$ lies in $R\mbox{-{\rm Gproj}}$ if and only if $T_R(M)\otimes_R Z$ lies in $T_R(M)\mbox{-{\rm Gproj}}$.
\item If   ${\rm Tor}_i^R(T_R(M), Z)=0$ for each $i\geq 1$, then we have  $${\rm Gpd}_R Z={\rm Gpd}_{T_R(M)} T_R(M)\otimes_R Z.$$
    \item The ring $R$ is CM-free if and only if so is $T_R(M)$.
\end{enumerate}
\end{thm}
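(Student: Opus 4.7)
The plan is to prove the main equality $T_R(M)\mbox{-Gproj} = {\rm Gmon}(M\otimes_R -)$ first, and then derive each of the three consequences in turn. The inclusion ``$\supseteq$'' is exactly Lemma \ref{lem:incl}, which applies because $M$ is admissible, hence in particular left-admissible. The inclusion ``$\subseteq$'' is the implication $(2)\Rightarrow(1)$ of Proposition \ref{prop:incl}, since the hypothesis of the theorem is precisely condition (2) of that proposition. The symmetric statement $T_R(M)^{\rm op}\mbox{-Gproj}= {\rm Gmon}(-\otimes_R M)$ follows by the same argument applied to $R^{\rm op}$, using right-admissibility.

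For consequence (1), observe that under the identification (\ref{equ:iso-cat}), the $T_R(M)$-module $T_R(M)\otimes_R Z$ corresponds to $({\rm Ind}(Z), c_Z)$. By construction, $c_Z$ is a split monomorphism with cokernel $Z$. Hence $({\rm Ind}(Z), c_Z)\in {\rm Gmon}(M\otimes_R -)$ if and only if $Z \in R\mbox{-Gproj}$, and the main equality converts this to the equivalence claimed in (1).

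For (2), the key auxiliary fact is that for any $G\in R\mbox{-Gproj}$, one has ${\rm Tor}_i^R(T_R(M), G)=0$ for every $i\geq 1$. This follows from left-admissibility by dimension shifting along a complete resolution, since syzygies of Gorenstein projective modules remain Gorenstein projective. Granted this, take a Gorenstein projective resolution $0\to G^{-n}\to\cdots\to G^0\to Z\to 0$ of length $n={\rm Gpd}_R Z$, splice it into short exact sequences, and apply $T_R(M)\otimes_R -$; the auxiliary fact together with the hypothesis ${\rm Tor}_i^R(T_R(M), Z)=0$ propagates through the splicing and keeps every short sequence exact. Combined with (1) this gives a Gorenstein projective $T_R(M)$-resolution of $T_R(M)\otimes_R Z$ of length $n$, hence ${\rm Gpd}_{T_R(M)} T_R(M)\otimes_R Z \leq {\rm Gpd}_R Z$. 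For the reverse inequality, resolve $Z$ by finitely generated projective $R$-modules; the Tor-vanishing hypothesis ensures that $T_R(M)\otimes_R -$ produces a projective $T_R(M)$-resolution, so $T_R(M)\otimes_R \Omega^n_R Z$ is an $n$-th syzygy of $T_R(M)\otimes_R Z$. The standard criterion ``${\rm Gpd}\leq n$ if and only if the $n$-th syzygy is Gorenstein projective'' together with (1) then yields ${\rm Gpd}_R Z\leq {\rm Gpd}_{T_R(M)} T_R(M)\otimes_R Z$.

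For (3), assume first that $R$ is CM-free and pick $(X, u) \in T_R(M)\mbox{-Gproj}$. By the main equality, $u$ is a monomorphism with ${\rm Cok}\, u \in R\mbox{-Gproj}=R\mbox{-proj}$, so the defining short exact sequence $0\to M\otimes_R X\to X\to {\rm Cok}\, u\to 0$ splits, $u$ is a split monomorphism, and Lemma \ref{lem:ind} gives $(X, u)\simeq ({\rm Ind}({\rm Cok}\, u), c_{{\rm Cok}\, u})$, which is a projective $T_R(M)$-module. Conversely, if $T_R(M)$ is CM-free and $G \in R\mbox{-Gproj}$, then by (1) the module $T_R(M)\otimes_R G$ is Gorenstein projective, hence projective, so Lemma \ref{lem:pd}(1) forces $G$ to be projective. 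The main obstacle is the careful propagation of Tor-vanishing to every syzygy in (2), which is handled by the auxiliary fact above.
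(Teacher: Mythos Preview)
Your proof is correct and, for the main equality and for consequences (1) and (3), follows the paper's route exactly (Lemma~\ref{lem:incl} plus Proposition~\ref{prop:incl} for the equalities; the identification of $T_R(M)\otimes_R Z$ with $({\rm Ind}(Z),c_Z)$ for (1); Lemma~\ref{lem:ind} and Lemma~\ref{lem:pd}(1) for (3)).

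The only real difference is in consequence (2). The paper handles both inequalities in a single stroke: take a projective $R$-resolution of $Z$, apply $T_R(M)\otimes_R-$ using the Tor-hypothesis to get a projective $T_R(M)$-resolution, and then observe that the $n$-th $R$-syzygy $\Omega^n_R Z$ is Gorenstein projective over $R$ if and only if its image $T_R(M)\otimes_R\Omega^n_R Z$ is Gorenstein projective over $T_R(M)$, by (1). Your ``$\geq$'' argument is exactly this, and in fact the biconditional you invoke already yields the full equality; your separate ``$\leq$'' argument via a Gorenstein projective resolution is therefore unnecessary. That extra argument also leans on the auxiliary fact ${\rm Tor}_i^R(T_R(M),G)=0$ for all $G\in R\mbox{-Gproj}$, which is true but not quite ``dimension shifting'' alone: shifting handles the index $i$ once you know ${\rm Tor}_1^R(M^{\otimes_R s},G)=0$, but passing from $M$ to $M^{\otimes_R s}$ requires an induction on $s$ that uses the full left-admissibility condition ${\rm Tor}_1^R(M, M^{\otimes_R i}\otimes_R G)=0$ for all $i\geq 0$. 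So the detour works, but the paper's single projective-resolution argument is cleaner and avoids this extra bookkeeping.
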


\begin{proof}
We just prove the consequences. Recall the isomorphism (\ref{equ:iso-cat}), which identifies $T_R(M)\otimes_R Z$ with the induced representation $({\rm Ind}(Z), c_Z)$. The cokernel of $c_Z$ is isomorphic to $Z$. Then (1) follows immediately. For (2), we take an exact sequence
$$0 \rightarrow Y\rightarrow P^{-n}\rightarrow P^{1-n}\rightarrow \cdots \rightarrow P^{-1}\rightarrow P^0\rightarrow Z\rightarrow 0$$
of $R$-modules with each $P^{-i}$ projective. Applying $T_R(M)\otimes_R-$ to it, we get an exact sequence starting at $T_R(M)\otimes_R Y$, with middle terms projective $T_R(M)$-modules. Recall that ${\rm Gpd}_R Z\leq n+1$ if and only if $Y$ is Gorenstein projective. A similar remark holds for $T_R(M)\otimes_R Z$. Then we are done by (1), applied to $Y$.

For the ``only if" part of (3), we assume that $R$ is CM-free.  Gorenstein projective $T_R(M)$-modules are of the form $(X, u)$ with $u$ a monomorphism and ${\rm Cok} u\in R\mbox{-Gproj}$. Hence ${\rm Cok}u=P$ is projective. By Lemma \ref{lem:ind} and its proof, we infer that $(X, u)$ is isomorphic to $({\rm Ind}(P), c_P)$, which is identified with a projective $T_R(M)$-module.  For the ``if" part, we take any Gorenstein projective $R$-module $G$. Then $T_R(M)\otimes_R G$ is Gorenstein projective and thus projective. Then we are done by Lemma \ref{lem:pd}(1).
\end{proof}

\section{Perfect bimodules}

In this section, we study homological conditions, under which Theorem \ref{thm:2} applies. For this, we introduce the notion of a perfect bimodule; see Definiton \ref{defn:per}.

Let $R$ be a two-sided noetherian ring. We only consider finitely generated $R$-modules.  The following result is well known.

\begin{lem}\label{lem:class}
Let $Y$ be a right $R$-module. Suppose that we are given an exact sequence in $R\mbox{-{\rm mod}}$
$$\cdots \rightarrow E^{-n} \rightarrow E^{-(n-1)}\rightarrow \cdots \rightarrow E^{-1}\rightarrow E^0\rightarrow X\rightarrow 0$$
with ${\rm Tor}_i^R(Y, E^{-j})=0$ for each $i\geq 1$ and $j\geq 0$. Denote its $(-i)$-th cocycle   by $Z^{-i}$. Then the following statements hold.
\begin{enumerate}
\item The complex $Y\otimes_R E^\bullet$ computes ${\rm Tor}_i^R(Y, X)$, that is, $H^{-i}(Y\otimes_R E^\bullet)\simeq {\rm Tor}_i^R(Y, X)$ for $i\geq 0$.
\item There is an isomorphism ${\rm Tor}^R_i(Y, Z^{-j})={\rm Tor}^R_{i+j+1}(Y, X)$ for each $i\geq 1$ and $j\geq 0$.
    \item Assume that ${\rm pd}_{R^{\rm op}} Y<\infty$ and $F^\bullet$ is an acyclic complex with ${\rm Tor}_i^R(Y, F^{j})=0$ for each $i\geq 1$ and $j\in \mathbb{Z}$. Then the complex $Y\otimes_R F^\bullet$ is acyclic.
\end{enumerate}
\end{lem}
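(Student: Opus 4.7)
The plan is to prove (1) by a standard double complex argument, then (2) by dimension shifting, and finally (3) by combining dimension shifting with the hypothesis $\mathrm{pd}_{R^{\mathrm{op}}} Y < \infty$. For (1), I would choose a projective resolution $P^\bullet \to Y$ in $R^{\mathrm{op}}\mbox{-mod}$ and form the third-quadrant double complex $D^{p,q} = P^p \otimes_R E^q$, totalized via direct sums. Since all nonzero terms sit in $\{p \leq 0,\, q \leq 0\}$, each total degree is a finite direct sum, so both filtration spectral sequences converge unconditionally. Taking horizontal cohomology first, flatness of $P^p$ together with exactness of $\cdots \to E^{-1} \to E^0 \to X \to 0$ identifies the rows with $P^p \otimes_R X$ concentrated in degree zero; one more page gives $\mathrm{Tor}_\bullet^R(Y, X)$. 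Taking vertical cohomology first, the hypothesis $\mathrm{Tor}_i^R(Y, E^q) = 0$ for $i \geq 1$ concentrates the columns in degree zero at $Y \otimes_R E^q$; one more page gives the cohomology of $Y \otimes_R E^\bullet$. Comparing abutments yields (1).

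For (2), I would run dimension shifting along the short exact sequences $0 \to Z^{-j-1} \to E^{-j-1} \to Z^{-j} \to 0$ together with the terminal sequence $0 \to Z^0 \to E^0 \to X \to 0$. The long exact Tor sequence, combined with $\mathrm{Tor}_{\geq 1}^R(Y, E^{-\ell}) = 0$, collapses to the isomorphisms $\mathrm{Tor}_i^R(Y, Z^{-j}) \cong \mathrm{Tor}_{i+1}^R(Y, Z^{-j+1})$ for $i \geq 1$ and $\mathrm{Tor}_i^R(Y, Z^0) \cong \mathrm{Tor}_{i+1}^R(Y, X)$; iterating produces the formula in (2).

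For (3), let $W^k = \ker(F^k \to F^{k+1})$; acyclicity of $F^\bullet$ furnishes short exact sequences $0 \to W^k \to F^k \to W^{k+1} \to 0$ for every $k \in \mathbb{Z}$. The same dimension-shifting argument, using $\mathrm{Tor}_{\geq 1}^R(Y, F^\ell) = 0$, yields surjections $\mathrm{Tor}_{i+1}^R(Y, W^{k+1}) \twoheadrightarrow \mathrm{Tor}_i^R(Y, W^k)$ for $i \geq 1$, which iterate along $k$ to $\mathrm{Tor}_{i+n}^R(Y, W^{k+n}) \twoheadrightarrow \mathrm{Tor}_i^R(Y, W^k)$. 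Since $d := \mathrm{pd}_{R^{\mathrm{op}}} Y < \infty$, the left-hand side vanishes as soon as $i + n > d$, so $\mathrm{Tor}_i^R(Y, W^k) = 0$ for every $i \geq 1$ and $k \in \mathbb{Z}$. Then $Y \otimes_R -$ preserves each short exact sequence $0 \to W^k \to F^k \to W^{k+1} \to 0$, and splicing these recovers acyclicity of $Y \otimes_R F^\bullet$. The main subtlety lies exactly in this termination step: the finiteness of $\mathrm{pd}_{R^{\mathrm{op}}} Y$ is indispensable for forcing the chain of surjections to land at zero, without which the dimension-shifting iteration would only propagate the unknown $\mathrm{Tor}$ groups indefinitely.
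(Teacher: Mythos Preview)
Your proposal is correct and follows essentially the same approach as the paper, which simply declares (1) and (2) classical and says (3) follows from them; you have supplied the standard details (double complex for (1), dimension shifting for (2) and (3)). One minor remark: in your argument for (3) the connecting maps $\mathrm{Tor}_{i+1}^R(Y, W^{k+1}) \to \mathrm{Tor}_i^R(Y, W^k)$ are actually isomorphisms for $i \geq 1$ since both flanking terms $\mathrm{Tor}_{i+1}^R(Y, F^k)$ and $\mathrm{Tor}_i^R(Y, F^k)$ vanish, though of course the surjectivity you invoke already suffices.
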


\begin{proof}
The statements (1) and (2) are classical. The last one follows from (1) and (2).
\end{proof}

The following consideration is related to the one in \cite{MY}. Let $M$ be an $R$-bimodule, which is finitely generated on both sides. We will consider the following Tor-vanishing conditions:

 \noindent{\rm  (P)} \quad  ${\rm Tor}^R_i(M, M^{\otimes_R j})=0$ for all $i,j\geq 1$.

\begin{lem}\label{lem:tensor}
We assume that the $R$-bimodule $M$ satisfies condition (P). Then the following statements are equivalent for each left $R$-module $Y$:
\begin{enumerate}
\item ${\rm Tor}_i^R(M, M^{\otimes_R j}\otimes_R Y)=0$ for any $i\geq 1$ and $j\geq 0$;
\item ${\rm Tor}_i^R(M^{\otimes_R s}, M^{\otimes_R j}\otimes_R Y)=0$ for any $i, s\geq 1$ and $j\geq 0$;
\item ${\rm Tor}_i^R(M^{\otimes_R s}, Y)=0$ for any $i, s\geq 1$.
\end{enumerate}
\end{lem}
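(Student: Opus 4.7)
The implications $(2)\Rightarrow(1)$ (set $s=1$) and $(2)\Rightarrow(3)$ (set $j=0$, using $M^{\otimes_R 0}=R$) are immediate, so it remains to deduce $(2)$ from $(1)$ or from $(3)$. The whole argument will flow from a single inductive application of Lemma \ref{lem:class}(1), together with the Tor-vanishing hypothesis (P).

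I would first prove $(1)\Rightarrow(3)$ by induction on $s\geq 1$. The base case $s=1$ is precisely the $j=0$ instance of $(1)$. For the inductive step, assume ${\rm Tor}_i^R(M^{\otimes_R s}, Y)=0$ for all $i\geq 1$ and pick a projective resolution $Q^\bullet\to Y$ in $R\mbox{-mod}$. By the inductive hypothesis, the complex $M^{\otimes_R s}\otimes_R Q^\bullet$ is acyclic in negative degrees, hence provides a (left) resolution of $M^{\otimes_R s}\otimes_R Y$. Each term $M^{\otimes_R s}\otimes_R Q^{-k}$ is a direct summand of a sum of copies of $M^{\otimes_R s}$, so (P) gives ${\rm Tor}_i^R(M, M^{\otimes_R s}\otimes_R Q^{-k})=0$ for $i\geq 1$ and $k\geq 0$. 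Applying Lemma \ref{lem:class}(1) with the right $R$-module $M$ and the resolution $M^{\otimes_R s}\otimes_R Q^\bullet\to M^{\otimes_R s}\otimes_R Y$ then yields the key shifting identity
\[
{\rm Tor}_i^R(M^{\otimes_R(s+1)}, Y) \;=\; H^{-i}\bigl(M\otimes_R M^{\otimes_R s}\otimes_R Q^\bullet\bigr) \;\cong\; {\rm Tor}_i^R(M, M^{\otimes_R s}\otimes_R Y),
\]
whose right-hand side vanishes for $i\geq 1$ by $(1)$. This completes the induction.

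To upgrade this to $(1)\Rightarrow(2)$, I would observe that condition $(1)$ is stable under replacing $Y$ by $M^{\otimes_R j}\otimes_R Y$, since by associativity of tensor products
\[
{\rm Tor}_i^R\bigl(M,\, M^{\otimes_R j'}\otimes_R(M^{\otimes_R j}\otimes_R Y)\bigr) \;=\; {\rm Tor}_i^R\bigl(M,\, M^{\otimes_R(j+j')}\otimes_R Y\bigr) \;=\; 0.
\]
Applying the already-proved implication $(1)\Rightarrow(3)$ with $Y$ replaced by $M^{\otimes_R j}\otimes_R Y$ yields exactly $(2)$.

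The argument is essentially formal once Lemma \ref{lem:class}(1) is available, and I do not expect any genuine obstacle. The only point demanding care is the passage from the raw projective resolution $Q^\bullet$ of $Y$ to the $M^{\otimes_R s}$-tensored resolution of $M^{\otimes_R s}\otimes_R Y$: this is where the inductive hypothesis is used to secure exactness and where (P) is used to secure Tor-acyclicity against $M$, and both are needed in tandem. Keeping track of the left/right bimodule structures on the various $M^{\otimes_R s}$ is routine and causes no serious bookkeeping issue.
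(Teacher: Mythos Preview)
Your argument establishes $(2)\Rightarrow(1)$, $(2)\Rightarrow(3)$, $(1)\Rightarrow(3)$, and $(1)\Rightarrow(2)$, but nowhere do you prove $(3)\Rightarrow(1)$ or $(3)\Rightarrow(2)$. As written the equivalence is not closed: you only have $(1)\Leftrightarrow(2)\Rightarrow(3)$. The sentence ``it remains to deduce $(2)$ from $(1)$ or from $(3)$'' must mean \emph{both} implications are needed, and you supply only the first.

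The gap is easy to fill with your own shifting identity, read in the other direction. Assume $(3)$ and fix $j\geq 1$. With $Q^\bullet$ a projective resolution of $Y$, hypothesis $(3)$ for $s=j$ makes $M^{\otimes_R j}\otimes_R Q^\bullet$ a resolution of $M^{\otimes_R j}\otimes_R Y$, and (P) makes its terms $M$-acyclic; Lemma~\ref{lem:class}(1) then gives
\[
{\rm Tor}_i^R\bigl(M,\, M^{\otimes_R j}\otimes_R Y\bigr)\;\cong\; H^{-i}\bigl(M^{\otimes_R(j+1)}\otimes_R Q^\bullet\bigr)\;=\;{\rm Tor}_i^R\bigl(M^{\otimes_R(j+1)},\, Y\bigr)\;=\;0.
\]
This is precisely the paper's argument for $(3)\Rightarrow(1)$. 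Apart from this omission your approach coincides with the paper's: both rest on Lemma~\ref{lem:class}(1) and the identification ${\rm Tor}_i^R(M^{\otimes_R(s+1)}, Y)\cong {\rm Tor}_i^R(M, M^{\otimes_R s}\otimes_R Y)$. The paper organizes the induction slightly differently, proving $(1)\Rightarrow(2)$ directly by repeatedly tensoring a projective resolution of $M^{\otimes_R j}\otimes_R Y$ with $M$, whereas you first prove $(1)\Rightarrow(3)$ and then substitute $M^{\otimes_R j}\otimes_R Y$ for $Y$; these are equivalent rearrangements of the same computation.
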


We will call an $R$-module $Y$ \emph{$M$-flat}, provided that it satisfies one of the above equivalent conditions.

\begin{proof}
To show ``(1) $\Rightarrow$ (2)", we fix $j\geq 0$ and $s\geq 2$. Take a projective resolution $P^\bullet$ of $M^{\otimes_R j}\otimes_R Y$. It follows from (1) that $M\otimes_R P^\bullet$ is quasi-isomorphic to $M^{\otimes_R(j+1)}\otimes_R Y$. By Lemma \ref{lem:class}(1), the complex $M\otimes_R (M\otimes_R P^\bullet)$ is quasi-isomorphic to $M^{\otimes_R(j+2)}\otimes_R Y$, using the condition ${\rm Tor}_i^R(M, M^{\otimes_R(j+1)}\otimes_R Y)=0$ for each $i\geq 1$. Iterating this argument, we infer that $M^{\otimes_R s}\otimes_R P^\bullet$ is quasi-isomorphic to $M^{\otimes_R(j+s)}\otimes_R Y$. This proves (2). The implications  ``(2) $\Rightarrow$ (1)" and ``(2) $\Rightarrow$ (3)" are clear.

It remains to show ``(3) $\Rightarrow$ (1)". For this, we fix $j\geq 1$. Take a projective resolution $P^\bullet$ of $Y$. Then by (3), the complex $M^{\otimes_R j}\otimes_R P^\bullet$ is quasi-isomorphic to $M^{\otimes_R j}\otimes_R Y$. Applying Lemma \ref{lem:class}(1), we infer that $M \otimes_R (M^{\otimes_R j}\otimes_R P^\bullet) $ computes ${\rm Tor}_i^R(M,  M^{\otimes_R j}\otimes_R Y)$. But the complex is isomorphic to $M^{\otimes_R (j+1)}\otimes_R P^\bullet$, which is quasi-isomorphic to $M^{\otimes_R(j+1)}\otimes_R Y$ by (3). It follows that ${\rm Tor}_i^R(M,  M^{\otimes_R j}\otimes_R Y)=0$ for $i\geq 1$.
\end{proof}

The following consequence implies that condition (P) is symmetric.

\begin{cor}
Let $M$ be an $R$-bimodule. Then $M$ satisfies condition (P) if and only if ${\rm Tor}^R_i(M^{\otimes_R s}, M^{\otimes_R j})=0$ for each $i, s, j\geq 1$, if and only if ${\rm Tor}^R_i(M^{\otimes_R s}, M)=0$ for each $i, s\geq 1$.
\end{cor}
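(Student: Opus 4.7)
Label the three conditions as (a), (b), (c): here (a) is condition (P); (b) is ${\rm Tor}^R_i(M^{\otimes_R s}, M^{\otimes_R j})=0$ for $i,s,j\geq 1$; and (c) is ${\rm Tor}^R_i(M^{\otimes_R s}, M)=0$ for $i,s\geq 1$. The implications (b)$\Rightarrow$(a) (set $s=1$) and (b)$\Rightarrow$(c) (set $j=1$) are immediate, so the plan reduces to proving (a)$\Rightarrow$(b) and (c)$\Rightarrow$(a).

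For (a)$\Rightarrow$(b), my plan is to feed the previous lemma back into itself. Assuming (P), fix any $t\geq 1$ and apply Lemma \ref{lem:tensor} to the left $R$-module $Y=M^{\otimes_R t}$. Condition (1) of that lemma then reads ${\rm Tor}_i^R(M, M^{\otimes_R(j+t)})=0$ for $i\geq 1$ and $j\geq 0$, which is exactly (P) since $j+t\geq 1$. The lemma therefore delivers condition (2), and specializing $j=0$ yields (b).

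The main obstacle is (c)$\Rightarrow$(a), since (c) is the right-placed analogue of (a) and one must genuinely exploit the symmetry of Tor to move $M$ from one side to the other; the hypothesis (P) is not available yet, so the proof of Lemma \ref{lem:tensor} cannot be cited verbatim. My plan is to prove ${\rm Tor}_i^R(M, M^{\otimes_R k})=0$ for all $i\geq 1$ by induction on $k\geq 1$. The base case $k=1$ is just (c) with $s=1$. For the inductive step, fix $k\geq 2$ and a projective resolution $P^\bullet$ of the left $R$-module $M$. By (c) with $s=k-1$, the higher Tor groups ${\rm Tor}_i^R(M^{\otimes_R(k-1)}, M)$ vanish, so the complex $M^{\otimes_R(k-1)}\otimes_R P^\bullet$ is a resolution of $M^{\otimes_R k}$ whose terms $M^{\otimes_R(k-1)}\otimes_R P^{-n}$ are direct summands of finite direct sums of copies of $M^{\otimes_R(k-1)}$. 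The inductive hypothesis gives ${\rm Tor}_i^R(M, M^{\otimes_R(k-1)})=0$ for $i\geq 1$, which is precisely the Tor-vanishing needed for Lemma \ref{lem:class}(1) to compute ${\rm Tor}_i^R(M, M^{\otimes_R k})$ as the cohomology of $M\otimes_R\bigl(M^{\otimes_R(k-1)}\otimes_R P^\bullet\bigr)$. But this complex is canonically isomorphic to $M^{\otimes_R k}\otimes_R P^\bullet$, whose cohomology computes ${\rm Tor}_i^R(M^{\otimes_R k}, M)$, and this vanishes by (c) with $s=k$. The induction closes, yielding (a).
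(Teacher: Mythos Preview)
Your proof is correct and follows the paper's approach: the paper's one-line proof simply says ``take $Y=M$ in the previous lemma,'' and your explicit induction for (c)$\Rightarrow$(a) is exactly the unfolding of the lemma's proof of (3)$\Rightarrow$(1) with $Y=M$, where the hypothesis (P) needed at each stage is supplied by the inductive hypothesis rather than assumed outright. You are in fact more careful than the paper here, since the lemma's \emph{statement} assumes (P), so the (c)$\Rightarrow$(a) direction cannot be read off from the statement alone; your choice of $Y=M^{\otimes_R t}$ for (a)$\Rightarrow$(b) is a harmless variant of the paper's $Y=M$.
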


\begin{proof}
We just take $Y=M$ in the previous lemma.
\end{proof}

\begin{defn}\label{defn:per}
We call an $R$-bimodule $M$ \emph{perfect}, provided that it satisfies ${\rm pd}_R M<\infty$, ${\rm pd}_{R^{\rm op}} M<\infty$, and  condition (P). \hfill $\square$
\end{defn}

\begin{lem}\label{lem:per1}
Let $M$ be a perfect bimodule, and $Y\in R\mbox{-{\rm mod}}$. Then the following statements hold.
\begin{enumerate}
\item If ${\rm Tor}^R_i(M, Y)=0$ for each $i\geq 1$, then we have ${\rm pd}_R (M\otimes_R Y)\leq {\rm pd}_R M+{\rm pd}_RY$.
\item For each $i\geq 0$, we have ${\rm pd}_R M^{\otimes_R i}\leq i {\rm pd}_R M$ and ${\rm pd}_{R^{\rm op}} M^{\otimes_R i}\leq i {\rm pd}_{R^{\rm op}}M$.
    \item Assume  further that $M^{\otimes_R(N+1)}=0$ for $N\geq 1$. Then ${\rm pd}_R T_R(M)\leq N{\rm pd}_RM$ and ${\rm pd}_{R^{\rm op}} T_R(M)\leq N{\rm pd}_{R^{\rm op}}M$.
\end{enumerate}
\end{lem}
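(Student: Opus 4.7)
My plan is to treat the three parts in order, with (1) being the substantive content and (2), (3) following by induction and bookkeeping respectively.

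For part (1), I would start by choosing a finite projective resolution
$0 \to P^{-p} \to \cdots \to P^{-1} \to P^0 \to Y \to 0$
of $Y$ by finitely generated projective left $R$-modules, where $p = {\rm pd}_R Y$. The hypothesis ${\rm Tor}_i^R(M, Y)=0$ for $i \geq 1$ ensures that tensoring this resolution over $R$ with $M$ preserves exactness, producing a length-$p$ resolution $M \otimes_R P^\bullet \to M \otimes_R Y$ (though the terms need not be projective). Since each $P^{-i}$ is a direct summand of some $R^{n_i}$, the term $M \otimes_R P^{-i}$ is a direct summand of $M^{n_i}$ and therefore satisfies ${\rm pd}_R(M \otimes_R P^{-i}) \leq {\rm pd}_R M$. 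The desired bound ${\rm pd}_R(M \otimes_R Y) \leq {\rm pd}_R M + p$ then follows by a dimension-shift along the short exact sequences extracted from this resolution, using the elementary inequality ${\rm pd}_R C \leq \max({\rm pd}_R B, {\rm pd}_R A + 1)$ for any short exact sequence $0 \to A \to B \to C \to 0$.

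For part (2), I would induct on $i$. The cases $i = 0$ and $i = 1$ are immediate. For $i \geq 2$, condition (P) yields ${\rm Tor}_j^R(M, M^{\otimes_R(i-1)}) = 0$ for all $j \geq 1$, so part (1) applied with $Y = M^{\otimes_R(i-1)}$ together with the inductive hypothesis gives ${\rm pd}_R M^{\otimes_R i} \leq {\rm pd}_R M + (i-1){\rm pd}_R M = i \cdot {\rm pd}_R M$. The right-module inequality follows by running the same argument over $R^{\rm op}$, which is legitimate because condition (P) is symmetric by the corollary preceding Definition \ref{defn:per}. Part (3) is then pure bookkeeping: as a left $R$-module $T_R(M) = \bigoplus_{i=0}^N M^{\otimes_R i}$ is a finite direct sum, so ${\rm pd}_R T_R(M) = \max_{0 \leq i \leq N} {\rm pd}_R M^{\otimes_R i} \leq N \cdot {\rm pd}_R M$ by (2), and symmetrically on the right.

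The only nontrivial step is the dimension-shift argument in part (1); everything else is formal. Since that inequality is a standard tool, I do not anticipate any serious obstacle, and the proof should fit comfortably within a few lines per part.
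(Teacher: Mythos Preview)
Your proposal is correct and follows essentially the same route as the paper: take a finite projective resolution of $Y$, tensor with $M$ to obtain an exact complex whose terms have projective dimension at most ${\rm pd}_R M$, and read off the bound; parts (2) and (3) then follow by induction and the finite direct-sum decomposition, exactly as you outline. The paper's proof is just a terser version of what you wrote.
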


\begin{proof}
For (1), we assume that ${\rm pd}_R Y=n<\infty$. Take a projective resolution $P^\bullet$ of $Y$, which has length $n$. Then by assumption, the complex $M\otimes_R P^\bullet$ is quasi-isomorphic to $M\otimes_R Y$. We observe that each component of $M\otimes_R P^\bullet$ has projective dimension at most ${\rm pd}_R M$. Then (1) follows immediately. (2) follows from (1) by induction, and (3) follows from (2).
\end{proof}

We observe that Theorem \ref{thm:2} applies to nilpotent perfect bimodules.

\begin{prop}
Let $R$ be a two-sided noetherian ring, and $M$ be a nilpotent perfect $R$-bimodule. Then the conditions in Theorem \ref{thm:2} are fulfilled.
\end{prop}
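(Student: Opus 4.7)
The plan is to verify in turn (i) the admissibility of $M$ on both sides, and (ii) the acyclicity of the complexes $M\otimes_R Q^\bullet$ and $(Q^\bullet)^*\otimes_R M$ for every totally acyclic complex $Q^\bullet$ of $T_R(M)$-modules, where $(-)^* = {\rm Hom}_{T_R(M)}(-,T_R(M))$. The recurring tool is Lemma \ref{lem:class}(3): tensoring a one-sided finite-projective-dimension module with an acyclic complex whose components are individually $\mathrm{Tor}$-acyclic preserves acyclicity.

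For admissibility, Lemma \ref{lem:per1}(2) gives that each $M^{\otimes_R i}$ has finite projective dimension on either side. A standard dimension-shift against a finite projective resolution of $M^{\otimes_R i}$ then yields ${\rm Ext}^1_R(G, M^{\otimes_R i})=0$ for any Gorenstein projective $G$, since ${\rm Ext}^{\geq 1}_R(G, R)=0$ by definition. For the Tor half, I would first invoke Lemma \ref{lem:tensor} to reduce the vanishing ${\rm Tor}_1^R(M, M^{\otimes_R i}\otimes_R G)=0$ (for all $i\geq 0$) to $G$ being $M$-flat, that is, ${\rm Tor}_k^R(M^{\otimes_R s}, G)=0$ for all $k,s\geq 1$. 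The naive dimension-shift along a finite projective resolution of $M^{\otimes_R s}$ breaks down at $\mathrm{Tor}_1$; the remedy is to use a complete projective resolution $P^\bullet$ of $G$ instead. Since ${\rm pd}_{R^{\rm op}} M^{\otimes_R s}<\infty$ and each $P^j$ is projective, Lemma \ref{lem:class}(3) gives the acyclicity of $M^{\otimes_R s}\otimes_R P^\bullet$; interpreting its cohomology at positions $\leq -2$ via the projective resolution of $G$ extracted from the left half of $P^\bullet$ yields ${\rm Tor}_k^R(M^{\otimes_R s}, G)=0$ for $k\geq 1$. Right-admissibility follows by a symmetric argument.

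For the acyclicity conditions, write $Q^i = T_R(M)\otimes_R P^i$ with $P^i$ a finitely generated projective $R$-module. As a left $R$-module, $Q^i \cong \bigoplus_{j=0}^N M^{\otimes_R j}\otimes_R P^i$, so condition (P) combined with the projectivity of $P^i$ gives ${\rm Tor}_k^R(M, Q^i)=0$ for all $k\geq 1$ and all $i$. Since ${\rm pd}_{R^{\rm op}} M<\infty$ and $Q^\bullet$ is acyclic, Lemma \ref{lem:class}(3) forces $M\otimes_R Q^\bullet$ to be acyclic. For the dual, Lemma \ref{lem:dual} identifies $(Q^i)^*$ with $(P^i)^*\otimes_R T_R(M)$, whose right $R$-module structure decomposes as $\bigoplus_j (P^i)^*\otimes_R M^{\otimes_R j}$; using the symmetry of condition (P) established in the corollary preceding Definition \ref{defn:per}, one gets ${\rm Tor}_k^R((Q^i)^*, M)=0$ for $k\geq 1$. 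Since $(Q^\bullet)^*$ is acyclic by total acyclicity of $Q^\bullet$ and ${\rm pd}_R M<\infty$, the right-module analogue of Lemma \ref{lem:class}(3) yields the acyclicity of $(Q^\bullet)^*\otimes_R M$.

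The main obstacle is the Tor-vanishing step inside admissibility: the obvious inductive argument stops one dimension short, and the fix is to pass from an ordinary to a complete projective resolution so that Lemma \ref{lem:class}(3) can be applied. Once this trick is in place, the remaining verifications amount to essentially formal bookkeeping with condition (P) and Lemma \ref{lem:per1}.
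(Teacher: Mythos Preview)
Your proof is correct and follows the same route as the paper. The one point worth flagging is that what you call ``the main obstacle'' --- the vanishing ${\rm Tor}_k^R(M^{\otimes_R s}, G)=0$ for $G$ Gorenstein projective --- is in fact a standard property of Gorenstein projective modules: one has ${\rm Ext}_R^i(G,X)=0={\rm Tor}_j^R(Y,G)$ for all $i,j\geq 1$ whenever ${\rm pd}_R X<\infty$ and ${\rm pd}_{R^{\rm op}}Y<\infty$. The paper simply recalls this fact and combines it with Lemma~\ref{lem:per1}(2), obtaining $M$-flatness of $G$ in one line. Your derivation via a complete resolution and Lemma~\ref{lem:class}(3) is precisely the usual proof of that fact, so there is no genuine obstacle and no trick is needed. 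The remaining verifications --- the reduction via Lemma~\ref{lem:tensor}, and the acyclicity of $M\otimes_R Q^\bullet$ and $(Q^\bullet)^*\otimes_R M$ via Lemma~\ref{lem:class}(3) applied to $M$-flat components --- match the paper's argument essentially verbatim, with yours spelling out the dual side in slightly more detail.
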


\begin{proof}
Let $G$ be a Gorenstein projective $R$-module. Recall that ${\rm Ext}_R^i(G, X)=0={\rm Tor}_j^R(Y, G)$ for each $i, j\geq 1$, provided that ${\rm pd}_R X<\infty$ and ${\rm pd}_{R^{\rm op}} Y<\infty$.  Hence, by Lemma \ref{lem:per1}(2) we have ${\rm Ext}_R^i(G, M^{\otimes_R i})=0$ for $i\geq 0$. Moreover, it follows that $G$ is $M$-flat. Hence, by Lemma \ref{lem:tensor}(2) $M^{\otimes_R i}\otimes_R G$ is $M$-flat for $i\geq 1$. In particular, we have ${\rm Tor}_1^R(M, M^{\otimes_R i}\otimes_R G)=0$. Hence, $M$ is left-admissible. Similarly, it is right-admissible. The last condition follows from Lemma \ref{lem:class}(3), since each projective $T_R(M)$-module, as an $R$-module, is $M$-flat.
\end{proof}

\begin{thm}\label{thm:3}
Let $R$ be a two-sided noetherian ring, and $M$ be a nilpotent perfect $R$-bimodule. Then $R$ is Gorenstein if and only if so is $T_R(M)$. In this case, we have the following equalities
$${\rm G.dim}\;R-\delta \leq {\rm G.dim}\; T_R(M)\leq {\rm G.dim}\; R+\delta+1, $$
where $\delta={\rm min}\{{\rm pd}_{R}T_R(M), {\rm pd}_{R^{\rm op}}T_R(M)\}$.
\end{thm}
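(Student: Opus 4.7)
My plan is to prove the equivalence and the two-sided dimension bound simultaneously by bounding Gorenstein projective dimensions of left and right modules separately and then applying Lemma \ref{lem:goren}(2). I set $a={\rm pd}_R T_R(M)$ and $b={\rm pd}_{R^{\rm op}} T_R(M)$ (both finite by Lemma \ref{lem:per1}(3)), so that $\delta=\min\{a,b\}$. By symmetry I only describe the left-module arguments; the right-module versions are obtained by swapping $a$ and $b$.

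For the lower bound ${\rm G.dim}\,R-\delta\leq{\rm G.dim}\,T_R(M)$, I would assume $T_R(M)$ is $D$-Gorenstein. For a left $R$-module $Z$, the $b$-th syzygy $Y=\Omega^{-b}Z$ in a projective $R$-resolution satisfies ${\rm Tor}_i^R(T_R(M),Y)=0$ for $i\geq 1$ (Tor shifting and ${\rm pd}_{R^{\rm op}}T_R(M)=b$), so Theorem \ref{thm:2}(2) together with Lemma \ref{lem:goren}(1) gives ${\rm Gpd}_R Y={\rm Gpd}_{T_R(M)}T_R(M)\otimes_R Y\leq D$, hence ${\rm Gpd}_R Z\leq D+b$. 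With the symmetric ${\rm Gpd}_{R^{\rm op}}W\leq D+a$ for right $R$-modules, Lemma \ref{lem:goren}(2) shows $R$ is $(D+\delta)$-Gorenstein.

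For the upper bound ${\rm G.dim}\,T_R(M)\leq{\rm G.dim}\,R+\delta+1$, I would assume $R$ is $d$-Gorenstein. For any $T_R(M)$-module $X$, the exact sequence (\ref{equ:exact}) reads
$$0\to T_R(M)\otimes_R(M\otimes_R X_R)\to T_R(M)\otimes_R X_R\to X\to 0,$$
reducing the task to the sub-claim that ${\rm Gpd}_{T_R(M)}T_R(M)\otimes_R Z\leq d+b$ for every left $R$-module $Z$. To prove the sub-claim I would choose the $(d+b)$-th syzygy $Y$ of $Z$ in a projective $R$-resolution: $Y$ is then Gorenstein projective over $R$ (as $d+b\geq d$) and $M$-flat (as $d+b\geq b$), so Theorem \ref{thm:2}(1) makes $T_R(M)\otimes_R Y$ Gorenstein projective over $T_R(M)$. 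The upper $d$ short exact sequences of syzygies $0\to\Omega^{-k-1}Z\to P^{-k}\to\Omega^{-k}Z\to 0$ for $k\geq b$ all have ${\rm Tor}_1^R(T_R(M),\Omega^{-k}Z)={\rm Tor}_{k+1}^R(T_R(M),Z)=0$ and so tensor exactly, yielding a Gorenstein projective resolution of $T_R(M)\otimes_R\Omega^{-b}Z$ of length $d$ over $T_R(M)$. Extending through the remaining $b$ syzygies while handling the non-vanishing Tor-defects ${\rm Tor}_i^R(T_R(M),Z)$ for $1\leq i\leq b$ adds at most $b$ to the Gorenstein projective dimension. Combined with the displayed sequence and the symmetric right bound $d+a+1$, Lemma \ref{lem:goren}(2) delivers ${\rm G.dim}\,T_R(M)\leq d+\delta+1$.

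The main obstacle is precisely this extension step from $T_R(M)\otimes_R\Omega^{-b}Z$ to $T_R(M)\otimes_R Z$. For $k<b$ the short exact sequences $0\to\Omega^{-k-1}Z\to P^{-k}\to\Omega^{-k}Z\to 0$ do not tensor exactly with $T_R(M)$ because ${\rm Tor}_{k+1}^R(T_R(M),Z)$ can be nonzero. Handling these Tor-defects---either by splicing projective $T_R(M)$-resolutions of them into the tensored complex or by building a direct projective $T_R(M)$-resolution of $T_R(M)\otimes_R Z$ and verifying that its $(d+b)$-th syzygy, viewed as a representation of $M\otimes_R-$, lies in the category ${\rm Gmon}(M\otimes_R-)$---is the technical heart of the proof and is what forces the $\delta$ into the final bound rather than the deceptively sharper $d+1$.
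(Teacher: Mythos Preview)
Your lower bound (the ``if'' direction) is exactly the paper's argument: pass to a $b$-th $R$-syzygy so that the Tor-hypothesis of Theorem \ref{thm:2}(2) is met, transfer the Gorenstein projective dimension, and conclude via Lemma \ref{lem:goren}(2).

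For the upper bound, however, you have correctly identified a real gap and then left it open. Your route applies the sequence (\ref{equ:exact}) to $X$ itself, which forces you to bound ${\rm Gpd}_{T_R(M)}T_R(M)\otimes_R Z$ for an \emph{arbitrary} $R$-module $Z$; since such $Z$ need not be $M$-flat, the short exact sequences of $R$-syzygies below level $b$ do not tensor exactly with $T_R(M)$, and neither of the two fixes you sketch (splicing in resolutions of the Tor-defects, or analysing the $(d+b)$-th $T_R(M)$-syzygy of $T_R(M)\otimes_R Z$ as a representation) is carried out. As it stands the sub-claim is unproved.

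The paper avoids this obstacle by reversing the order of the two operations. Instead of applying (\ref{equ:exact}) to $X$ and then taking $R$-syzygies, it first takes an $r$-th syzygy $Y$ of $X$ \emph{inside $T_R(M)\mbox{-mod}$}. The point is that projective $T_R(M)$-modules, viewed over $R$, are already $M$-flat (they are direct sums of $M^{\otimes_R i}\otimes_R P$), so Lemma \ref{lem:class}(2) together with ${\rm pd}_{R^{\rm op}}T_R(M)=r$ forces ${\rm Tor}_j^R(T_R(M),Y)=0$ for all $j\geq 1$. Now apply (\ref{equ:exact}) to $Y$: both $Y$ and $M\otimes_R Y$ are $M$-flat, so Theorem \ref{thm:2}(2) applies directly to each of the outer terms of
\[
0\longrightarrow T_R(M)\otimes_R(M\otimes_R Y)\longrightarrow T_R(M)\otimes_R Y\longrightarrow Y\longrightarrow 0,
\]
giving ${\rm Gpd}_{T_R(M)}Y\leq d+1$ and hence ${\rm Gpd}_{T_R(M)}X\leq r+d+1$, with no Tor-defects to manage. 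The $\delta$ then appears only at the very end, when the left bound $r+d+1$ and the right bound $l+d+1$ are fed into Lemma \ref{lem:goren}(2).
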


\begin{proof}
Set $l={\rm pd}_{R}T_R(M)$ and $r={\rm pd}_{R^{\rm op}}T_R(M)$. For the ``only if" part, we assume that $R$ is $d$-Gorenstein with $d={\rm G.dim}\; R$. We claim that ${\rm Gpd}_{T_R(M)}X\leq r+d+1$ for any left $T_R(M)$-module $X$.

For the claim, we take an exact sequence in $T_R(M)\mbox{-mod}$
\begin{align}\label{equ:sequence}
0\rightarrow Y\rightarrow Q^{1-r}\rightarrow \cdots \rightarrow Q^{-1}\rightarrow Q^0\rightarrow X\rightarrow 0
\end{align}
with each $Q^{-i}$ projective. In case that $r=0$, we set $Y=X$. Since $M$ is perfect, we infer that ${\rm Tor}_j^R(T_R(M), Q^{-i})=0$ for $j\geq 1$ and $0\leq i< r$. By Lemma \ref{lem:class}(2), we infer that ${\rm Tor}_j^R(T_R(M), Y)=0$ for $j\geq 1$. Hence, as $R$-modules,  $Y$ and thus $M\otimes_R Y$ are $M$-flat. Recall the isomorphism (\ref{equ:iso-cat}). Then by (\ref{equ:exact}), we have an exact sequence in $T_R(M)\mbox{-mod}$
\begin{align}\label{equ:sequence2}0\longrightarrow T_R(M)\otimes_R(M\otimes_R Y)\longrightarrow T_R(M)\otimes_R Y\longrightarrow Y\longrightarrow 0.\end{align}
Since ${\rm Gpd}_RY\leq d$ and ${\rm Gpd}_R M\otimes_R Y\leq d$, we infer from Theorem \ref{thm:2}(2) that ${\rm Gpd}_{T_R(M)} T_R(M)\otimes_R(M\otimes_R Y)\leq d$ and ${\rm Gpd}_{T_R(M)}  T_R(M)\otimes_R Y\leq d$. Therefore, the exact sequence implies that ${\rm Gpd}_{T_R(M)} Y\leq d+1$, which implies the claim.

Similarly, we prove that ${\rm Gpd}_{T_R(M)^{\rm op}}X\leq l+d+1$ for any right $T_R(M)$-module $X$.  Then we are done by Lemma \ref{lem:goren}(2).

For the ``if" part, we assume that $T_R(M)$ is $d'$-Gorenstein. For any left $R$-module $Z$, we consider the exact sequence of $R$-modules
$$0\rightarrow K\rightarrow P^{1-r}\rightarrow \cdots \rightarrow P^{-1}\rightarrow P^0\rightarrow X\rightarrow 0$$
with each $P^{-i}$ projective. If $r=0$, we set $K=Z$. By a dimension-shift, we infer that ${\rm Tor}_j^R(T_R(M), K)=0$ for each $j\geq 1$. Hence $K$ is $M$-flat. By Theorem \ref{thm:2}(2), we have ${\rm Gpd}_R K={\rm Gpd}_{T_R(M)} T_R(M)\otimes_R K\leq d'$. Hence, we have ${\rm Gpd}_R Z\leq r+d'$. By a similar statement for right $R$-modules and Lemma \ref{lem:goren}(2), we complete the proof.
\end{proof}

The global dimension of a ring $S$ is denoted by ${\rm gl.dim}\; S$.

\begin{cor}\label{cor:1}
Keep the same assumptions as in Theorem \ref{thm:3}. Then $R$ has finite global dimension if and only if so does $T_R(M)$, in which case we have
$${\rm gl.dim}\;R-\delta \leq {\rm gl.dim}\; T_R(M)\leq {\rm gl.dim}\; R+\delta+1. $$
\end{cor}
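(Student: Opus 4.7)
The plan is to reduce Corollary \ref{cor:1} to Theorem \ref{thm:3} via the standard characterization: a two-sided noetherian ring $S$ has finite global dimension if and only if $S$ is Gorenstein and CM-free, in which case ${\rm gl.dim}\; S = {\rm G.dim}\; S$.

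First I would verify this characterization. If ${\rm gl.dim}\; S = n < \infty$, then ${\rm Ext}_S^{n+1}(-, S) = 0$ forces ${\rm id}_S S \leq n$, and the analogous argument over $S^{\rm op}$ gives ${\rm id}_{S^{\rm op}} S \leq n$; hence $S$ is Gorenstein with ${\rm G.dim}\; S \leq n$. Moreover, any Gorenstein projective module $G$ has ${\rm pd}_S G \leq n < \infty$, which by a standard argument (a totally acyclic complex of projectives having a cocycle of finite projective dimension must be contractible) forces $G$ to be projective; so $S$ is CM-free. Conversely, if $S$ is $d$-Gorenstein and CM-free, Lemma \ref{lem:goren}(1) gives ${\rm Gpd}_S X \leq d$ for every module $X$; taking a projective resolution and looking at the $d$-th syzygy $K$, we see $K$ is Gorenstein projective, hence projective by CM-freeness, whence ${\rm pd}_S X \leq d$. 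This gives ${\rm gl.dim}\; S \leq d$, and combined with the reverse inequality we obtain ${\rm gl.dim}\; S = {\rm G.dim}\; S$ whenever either side is finite.

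With this in hand, the corollary is immediate: Theorem \ref{thm:3} gives that $R$ is Gorenstein if and only if $T_R(M)$ is, and Theorem \ref{thm:2}(3) gives that $R$ is CM-free if and only if $T_R(M)$ is. Thus $R$ has finite global dimension precisely when $T_R(M)$ does. When both are finite, substituting ${\rm gl.dim} = {\rm G.dim}$ into the inequality of Theorem \ref{thm:3} yields
\[{\rm gl.dim}\;R-\delta \leq {\rm gl.dim}\; T_R(M)\leq {\rm gl.dim}\; R+\delta+1.\]

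The only real subtlety is the two-sided character of the characterization of finite global dimension over a noncommutative noetherian ring; this is handled uniformly by the Ext-vanishing argument for ${\rm id}_S S$ over both $S$ and $S^{\rm op}$. All the substantial homological content has already been absorbed into Theorems \ref{thm:2} and \ref{thm:3}, so no further computation is needed.
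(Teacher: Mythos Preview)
Your proposal is correct and follows essentially the same approach as the paper: the paper's proof simply recalls the characterization that a two-sided noetherian ring $S$ has finite global dimension if and only if it is Gorenstein and CM-free (with ${\rm G.dim}\;S = {\rm gl.dim}\;S$), and then invokes Theorem~\ref{thm:2}(3) together with Theorem~\ref{thm:3}. You have merely supplied the details of that characterization, which the paper leaves as a known fact.
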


\begin{proof}
Recall that a  two-sided noetherian ring $S$ has finite global dimension if and only if it is Gorenstein and CM-free, in which case we have ${\rm G.dim}\; S={\rm gl.dim}\; S$. Then we are done by Theorem \ref{thm:2}(3).
\end{proof}

\begin{lem}
Keep the same assumptions as in Theorem \ref{thm:3}. Let $X$ be a left $T_R(M)$-module. Then ${\rm pd}_{T_R(M)} X<\infty$ if and only if ${\rm pd}_R X<\infty$.
\end{lem}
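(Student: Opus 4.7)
The ``only if'' direction will be quick: given a finite $T_R(M)$-projective resolution of $X$, each projective summand has $R$-projective dimension at most ${\rm pd}_R T_R(M)$, which is finite by Lemma \ref{lem:per1}(3), so syzygy-chasing yields ${\rm pd}_R X \leq {\rm pd}_{T_R(M)} X + {\rm pd}_R T_R(M)$.

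For the ``if'' direction, write $X = (X_0, u)$ as a representation of $M \otimes_R -$ and apply the exact sequence (\ref{equ:exact}) to obtain
$$0 \to T_R(M) \otimes_R (M \otimes_R X_0) \to T_R(M) \otimes_R X_0 \to X \to 0.$$
Since ${\rm pd}_R X_0 < \infty$ and ${\rm pd}_R M < \infty$ (by perfection), both $X_0$ and $M \otimes_R X_0$ are $R$-modules of finite projective dimension, so the problem reduces to showing: for every $R$-module $Y$ with ${\rm pd}_R Y < \infty$, the induced module $T_R(M) \otimes_R Y$ has finite $T_R(M)$-projective dimension.

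My plan for this reduction is to use the augmentation $\pi \colon T_R(M) \twoheadrightarrow R$ and proceed in two steps. First, I would show ${\rm pd}_{T_R(M)} R < \infty$: applying (\ref{equ:exact}) to $(R, 0)$ yields $0 \to T_R(M) \otimes_R M \to T_R(M) \to R \to 0$; since $M$ is $M$-flat by perfection (Lemma \ref{lem:tensor}), Lemma \ref{lem:pd}(2) gives ${\rm pd}_{T_R(M)} T_R(M) \otimes_R M = {\rm pd}_R M$, whence ${\rm pd}_{T_R(M)} R \leq {\rm pd}_R M + 1$. Second, for any $R$-module $Z$ with ${\rm pd}_R Z < \infty$, I would bound ${\rm pd}_{T_R(M)} \pi^* Z$ via the Grothendieck spectral sequence arising from the adjunction $\pi^* \dashv {\rm Hom}_{T_R(M)}(R, -)$,
$$E_2^{p, q} = {\rm Ext}_R^p(Z, {\rm Ext}_{T_R(M)}^q(R, N)) \Longrightarrow {\rm Ext}_{T_R(M)}^{p+q}(\pi^* Z, N),$$
whose $E_2$-page is supported in the finite rectangle $[0, {\rm pd}_R Z] \times [0, {\rm pd}_{T_R(M)} R]$, giving ${\rm pd}_{T_R(M)} \pi^* Z \leq {\rm pd}_R Z + {\rm pd}_{T_R(M)} R$.

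Finally, the grading $T_R(M) = \bigoplus_{i=0}^N M^{\otimes_R i}$ provides a finite descending filtration of $T_R(M) \otimes_R Y$ by the $T_R(M)$-submodules $F^i = \bigoplus_{j \geq i} M^{\otimes_R j} \otimes_R Y$, with successive quotients $F^i / F^{i+1} \cong \pi^*(M^{\otimes_R i} \otimes_R Y)$. Each $M^{\otimes_R i} \otimes_R Y$ has finite $R$-projective dimension (bounded by $i \cdot {\rm pd}_R M + {\rm pd}_R Y$ via Lemma \ref{lem:per1}(2) and a tor-amplitude estimate), so by the second step above each subquotient, and therefore the iterated extension $T_R(M) \otimes_R Y$, has finite $T_R(M)$-projective dimension. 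The main subtle ingredient is the change-of-rings spectral sequence in step two; it behaves well precisely because both ${\rm pd}_R Z$ and ${\rm pd}_{T_R(M)} R$ are finite.
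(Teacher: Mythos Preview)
Your ``only if'' direction is fine, and your auxiliary steps~2 and~3 (the bound ${\rm pd}_{T_R(M)} R \leq {\rm pd}_R M + 1$ and the change-of-rings estimate ${\rm pd}_{T_R(M)} \pi^* Z \leq {\rm pd}_R Z + {\rm pd}_{T_R(M)} R$) are correct. The gap is in the reduction. You assert that ${\rm pd}_R X_0 < \infty$ together with ${\rm pd}_R M < \infty$ forces ${\rm pd}_R(M \otimes_R X_0) < \infty$, and later that ${\rm pd}_R(M^{\otimes_R i} \otimes_R Y) < \infty$ whenever ${\rm pd}_R Y < \infty$. This is essentially Lemma~\ref{lem:per1}(1), but that lemma carries the hypothesis ${\rm Tor}_j^R(M, Y) = 0$ for $j \geq 1$, which you have not arranged for $Y = X_0$; the vague ``tor-amplitude estimate'' does not supply it.

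The implication is in fact false for nilpotent perfect bimodules. Take $R = A \times B$ with $A = k[x]/(x^2)$ and $B = k[t]$, and let $M$ be the $A$-$B$-bimodule $A$ with $t$ acting on the right as multiplication by $x$. Then $M$ is projective on the left, has ${\rm pd}_{R^{\rm op}} M = 1$, and satisfies $M \otimes_R M = 0$, so $M$ is nilpotent and perfect. For $Y = k[t]/(t)$ one has ${\rm pd}_R Y = 1$, but $M \otimes_R Y \cong A/(x)$ has infinite projective dimension over $A$. Hence $T_R(M) \otimes_R Y$ has infinite projective dimension over $R$ and therefore, by the very lemma under discussion, also over $T_R(M)$. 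So the statement to which you reduce, ``${\rm pd}_R Y < \infty$ implies ${\rm pd}_{T_R(M)}(T_R(M) \otimes_R Y) < \infty$'', is false in general, and both your step~1 and step~4 break down.

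The paper avoids this by \emph{not} applying (\ref{equ:exact}) to $X$ itself. One first replaces $X$ by its $r$-th $T_R(M)$-syzygy $Y$, where $r = {\rm pd}_{R^{\rm op}} T_R(M)$; since projective $T_R(M)$-modules are $M$-flat as $R$-modules, a dimension shift gives ${\rm Tor}_j^R(T_R(M), Y) = 0$ for all $j \geq 1$. Only then is (\ref{equ:exact}) applied to $Y$: now $Y$ and $M \otimes_R Y$ are $M$-flat with finite $R$-projective dimension (Lemma~\ref{lem:per1}(1) legitimately applies), so Lemma~\ref{lem:pd}(2) gives finite $T_R(M)$-projective dimension for both induced modules directly, with no filtration or spectral sequence needed. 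The missing idea in your argument is precisely this preliminary passage to an $M$-flat syzygy.
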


\begin{proof}
The ``only if" part is trivial, since ${\rm pd}_R T_R(M)<\infty$. For the ``if" part, we assume that ${\rm pd}_RX<\infty$. Take the exact sequence (\ref{equ:sequence}) as above. Hence, as $R$-modules, $Y$ and $M\otimes_R Y$ are $M$-flat. We observe that ${\rm pd}_R Y<\infty$ and by Lemma~\ref{lem:per1}(1) ${\rm pd}_R (M\otimes_R Y)<\infty$. Hence, we infer by Lemma \ref{lem:pd}(2) that the left two terms in (\ref{equ:sequence2}) have finite projective dimension.  Then so does $Y$. We infer from (\ref{equ:sequence}) that the $T_R(M)$-module $X$ has finite projective dimension.
\end{proof}

Let $k$ be a field. We denote by $D={\rm Hom}_k(-, k)$ the $k$-dual. For quivers, we refer to \cite{ARS}.

\begin{exm}
{\rm (1) Let $M$ be a nilpotent $R$-bimodule, which is finitely generated projective on both sides. Then $M$ is perfect. Indeed, the tensor ring $T_R(M)$ is a projective left and right $R$-module. It follows from Theorem \ref{thm:3} that if $R$ is $d$-Gorenstein, then $T_R(M)$ is $(d+1)$-Gorenstein.  Taking $d=0$, we recover the examples in \cite[Section 6]{GLS} and \cite{Wang, LY}.

Let $R$ be the $k$-algebra given by the following quiver
\[\xymatrix{1 \ar[r]_{\alpha_1} & 2 \ar[r]_{\alpha_2} & 3\ar @/_0.9pc/[ll]_{\alpha_3}}\]
subject to relations $\{\alpha_2\alpha_1, \alpha_3\alpha_2, \alpha_1\alpha_3\}$; it is a quasi-Frobenius algebra. We denote by $e_i$ the idempotent corresponding to the vertex $i$. Take $M=Re_1\otimes_k e_3R$, which is an $R$-bimodule, projective on both sides. We observe that $M^{\otimes_R 2}=0$. It follows that $T_R(M)=R\ltimes M$,  the \emph{trivial extension} of $R$ by $M$. By Theorem \ref{thm:3}, we infer that $T_R(M)$ is $1$-Gorenstein. Indeed, the algebra $T_R(M)$ is a gentle algebra.

\vskip 3pt

(2) Let $R_1$ and $R_2$ be two-sided noetherian rings, and  $M$ be an $R_1$-$R_2$-bimodule which is finitely generated on both sides. Set $R=R_1\times R_2$. Then $M$ becomes naturally an $R$-module. Then the tensor ring $T_R(M)$ is isomorphic to the formal matrix ring $\begin{pmatrix}R_1 &  M\\ 0 & R_2\end{pmatrix}$. We observe that $M$ is a perfect $R$-module if and only if $M$ has finite projective dimension as a left $R_1$-module and a right $R_2$-module. Then Theorem \ref{thm:3} recovers \cite[Theorem 2.2(iii)]{XZ}; compare \cite[Theorem 3.3]{Chen12}. It is of interest to compare the conditions in Theorem \ref{thm:2} with the compatible bimodule in \cite[Definition 1.1]{Zhang}.

\vskip 3pt

(3)  This is a non-example of Corollary \ref{cor:1}.  Let $R$ be a tilted $k$-algebra of a finite acyclic quiver $Q$, in particular, its global dimension is at most two. Denote by $M={\rm Ext}^2_R(DR, R)$,  which is naturally an $R$-bimodule. We observe that $M^{\otimes_R 2}=0$; compare \cite[Proposition 4.7]{Am}. It follows that $T_R(M)=R\ltimes M$, which is isomorphic to the cluster-tilted algebra of $Q$; see \cite{ABS}. Recall from \cite{KR} that a cluster-tilted algebra  is $1$-Gorenstein, which usually has infinite global dimension. Hence, the bimodule $M$ is not perfect in general. We mention that  Gorenstein homological properties of trivial extensions are studied in \cite{MY}.  }
\end{exm}

\vskip 5pt

\noindent {\bf Acknowledgements.} \quad The project was initiated when both authors was visiting  the University of Bielefeld. We thank Henning Krause for his hospitality. The authors are supported by the National Natural Science Foundation of China (Nos. 11522113, 11671245, 11401401 and 11601441).

\bibliography{}

\vskip 10pt

 {\footnotesize \noindent Xiao-Wu Chen\\
 Key Laboratory of Wu Wen-Tsun Mathematics, Chinese Academy of Sciences,\\
 School of Mathematical Sciences, University of Science and Technology of China, Hefei 230026, Anhui, PR China}

 \vskip 5pt

 {\footnotesize \noindent Ming Lu\\
  Department of Mathematics, Sichuan University, Chengdu 610064, PR China }

\end{document}